\documentclass[preprint,lefttitle,11pt]{elsarticle}

\usepackage{amssymb,amsmath}
\usepackage{latexsym}   
\usepackage{amsthm}     
\usepackage{xcolor}
\usepackage{graphicx}
\usepackage[normalem]{ulem}
\usepackage[ruled,vlined]{algorithm2e}

\usepackage{tikz,pgfplots,tikz-network}
\usepackage{pgfplotstable}
\pgfplotsset{compat = newest}
\usetikzlibrary{graphs,graphs.standard}
\usetikzlibrary{arrows.meta}

\newtheorem{theorem}{Theorem}[section]
\newtheorem{remark}[theorem]{Remark}

\newtheorem{myexample}{Example}
\newtheorem{corollary}[theorem]{Corollary}
\newtheorem{lemma}[theorem]{Lemma}

\def\R{{\mathbb R}}

\def\Rnn{{\mathbb R}^{n\times n}}

\def\Rn0{{\mathbb R}^n_0}
\def\uno{\mathbf 1}
\def\fro{\mathrm{F}}
\def\che{{\max}}
\newcommand\wh{\widehat}
\newcommand{\hinv}[1]{{#1}^{[-1]}}

\DeclareMathOperator{\cau}{\tt Cauchy}
\renewcommand{\vec}{\mathrm{vec}}
\def\D{{\mathcal D}}

\begin{document}

\begin{frontmatter}

\title{Advances on the recovery of (perturbed) Cauchy matrices}

\author[1]{P.~Boito}
\ead{paola.boito@unipi.it}
\author[2]{D.~Fasino\corref{df}}
\ead{dario.fasino@uniud.it}
\author[1]{B.~Meini}
\ead{beatrice.meini@unipi.it}
\address[1] 
   {Dipartimento di Matematica,
   Universit\`a di Pisa,
   Largo Bruno Pontecorvo, 5, Pisa, Italy}
\address[2] 
   {Dipartimento di Scienze Matematiche, Informatiche e Fisiche,
   Universit\`a degli Studi di Udine,
   Via delle Scienze, 206,
   Udine, Italy}

\cortext[df]{Corresponding author}

\begin{abstract}
    Given a (possibly approximate) Cauchy matrix, how can we efficiently compute its generators? Expanding on previous work by Liesen and Luce 
    [Linear Algebra Appl. 493 (2016) 261--280], we present a general family of algorithms for Cauchy parameter recovery, together with new error estimates. We also introduce a displacement-based approximation, which leads to a new algorithm for Cauchy parameter recovery. Numerical experiments show that the algorithm based on the displacement approximation is generally more accurate than the other algorithms.
\end{abstract}

\begin{keyword}
Cauchy matrix \sep low-rank matrix approximation \sep best approximation \sep data recovery \sep CUR decomposition


\MSC 15B05 \sep 65Y20 

\end{keyword}

\end{frontmatter}

\section{Introduction}

Cauchy matrices are a family of structured matrices appearing 
in interpolation and approximation problems with rational functions. Moreover, they belong to the class of displacement structured matrices and, as such, they share notable computational properties with other 
structured matrix families, e.g., Toeplitz, Vandermonde, and Hankel matrices \cite{kailath}. In fact, several algebraic computations involving Cauchy matrices can be performed efficiently using fast algorithms \cite{kailath_book}. This is because, although a Cauchy matrix is fully populated by nonzero entries, it can be exactly described by a set of parameters growing linearly with the dimension. Cauchy matrices have been characterized in \cite{MR3314339}
as transition matrices between the eigenbases of two diagonalizable matrices that differ by a rank-one term. Moreover, row- and column-scaled versions 
of Cauchy matrices also appear in the numerical solution of secular equations and as eigenvector matrices of particular symmetric quasiseparable matrices \cite{MR4530350}.

The present work is motivated by the results of Liesen and Luce \cite{liesen2016fast}, who addressed the problem of determining whether a given matrix is Cauchy or can be approximated by a matrix with that structure.
To this goal, the authors of \cite{liesen2016fast} presented two algorithms. The first determines the parameters that define a Cauchy matrix using only the values in its first row and column.
The second computes the parameters 
of a Cauchy matrix that best approximates a perturbed data matrix, in some sense.
Both algorithms have optimal complexity, and the second is accompanied by {\em a posteriori} error bounds, i.e., upper bounds on the approximation errors based on the evaluation of suitably defined residuals. These algorithms are based on a characteristic property of Cauchy matrices: the matrix obtained by entrywise reciprocation is rank-two and has a specific structure. Therefore, the problems discussed by Liesen and Luce are somewhat related to the issue of representing rank-two matrices appropriately. In the case of generic matrices with non-negative elements, this latter problem is addressed in great depth in \cite{R2NMF}.

The problem of reconstructing a matrix having some kind of structure from noisy data is a well-known topic in numerical linear algebra that has been developed recently along different directions, because of the applications in mathematical modeling, signal processing, data compression, time series analysis, etc.
For example, in \cite{MR3598578}
the authors
address the problem of best approximation of a given matrix by a matrix of lower rank in the elementwise maximum norm.
Comprehensive summaries on the construction of a structured low-rank matrix that is nearest to a given matrix are given in
\cite{MR1987719, MR2530933}.
Also the reconstruction of perturbed matrices with displacement structures, notably Toeplitz, Hankel, and Vandermonde matrices, has received some attention, see e.g., \cite{WANG2016133, Halikias2024, MR4255495}. The computational approaches are usually based on the minimization of a possibly non-convex 
objective functional
which measures the error between the data matrix and the approximating one, which may be subject to both rank and structural constraints.
Analogous techniques are also employed for solving completion problems with displacement structured matrices, see \cite{MR4125767}.

In this work, we take a deeper look at the problem of recovering a Cauchy matrix from perturbed data.
After collecting some preliminary notions and results on Cauchy matrices in the next section, Section \ref{sec:generalfamily} proposes a unified framework for the description and analysis of the algorithms in \cite{liesen2016fast}, based on a suitable family of projectors onto a subspace of rank-2 matrices. 
This framework allows us to introduce a parametrized algorithm, here called Algorithm \ref{alg:generaluv}, for 
solving our parameter recovery problem that includes both algorithms discussed by Liesen and Luce as particular cases. Moreover, we provide error bounds on the recovered parameters that are  {\em a priori}, that is, intrinsic to the algorithm and do not depend on residual norms. For notational simplicity, we limit ourselves to considering real square matrices. Extending our results to rectangular and complex matrices only requires straightforward notational amendments and minor adaptations. On the other hand, we consider measuring approximation errors in both the Frobenius norm and the elementwise maximum norm.

Section \ref{sec:CUR} provides a further analysis of Algorithm 1 in \cite{liesen2016fast} that relies on CUR factorization theory and provides new error estimates that involve singular values of an augmented matrix.
In Section \ref{sec:displacementbased} we formulate a measure of `Cauchyness' based on the displacement characterization of the Cauchy structure, and propose a recovery strategy based on the minimization of that measure, which leads to Algorithm \ref{alg:last}. 
Finally, we present in Section~\ref{sec:numerical} the results of a series of numerical experiments to illustrate the performance of the various algorithms on perturbed Cauchy matrices.
As a side note, we discuss in the Appendix a parametrization of (entrywise reciprocated) Cauchy matrices that is perfectly well conditioned.
Compared to the one originally presented in 
\cite{liesen2016fast}, the new parametrization 
has a tighter error bound when recovering a perturbed Cauchy
matrix via Algorithm 2.

\subsection{Notation}

The following notation will be used throughout this paper.
The maximum (Chebyshev) norm and the Frobenius norm of a matrix $A\in\Rnn$ 
are defined as
$$
   \|A\|_\che = \max_{i,j=1,\ldots,n}|A_{ij}| , \qquad
   \|A\|_\fro = \bigg(\sum_{i,j=1}^n A_{ij}^2\bigg)^{1/2} ,
$$
respectively.
We sometimes use the symbol $\|\cdot\|_\star$ 
to denote any of the above matrix norms.
These norms share the property presented in the following lemma.

\begin{lemma}   \label{lem:easy}
If $A,B\in\Rnn$ are two matrices such that
$|A_{ij} - B_{ij}|/|A_{ij}| \leq \alpha$ for every $i,j=1,\ldots,n$
for some $\alpha \geq 0$,
then $\|A - B\|_\star/\|A\|_\star \leq \alpha$.
\end{lemma}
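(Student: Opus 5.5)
The plan is to turn the entrywise relative bound into an entrywise absolute bound and then use monotonicity of both norms. The hypothesis implicitly requires $A_{ij}\neq 0$ for every $i,j$, so it can be rewritten as
$$
   |A_{ij}-B_{ij}| \leq \alpha\,|A_{ij}| , \qquad i,j=1,\ldots,n .
$$
This says that the matrix $|A-B|$ of absolute values is dominated entrywise by $\alpha|A|$. Both norms under consideration are absolute and monotone: they depend only on the moduli of the entries, and they are nondecreasing in each modulus.

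For the maximum norm, I would take the maximum over $i,j$ of both sides of the entrywise bound. This gives
$$
   \|A-B\|_\che=\max_{i,j}|A_{ij}-B_{ij}|\le \alpha\max_{i,j}|A_{ij}|=\alpha\|A\|_\che .
$$

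For the Frobenius norm, I would square the entrywise bound, sum over all $i,j$, and take square roots. This gives
$$
   \|A-B\|_\fro^2=\sum_{i,j}|A_{ij}-B_{ij}|^2\le\alpha^2\sum_{i,j}A_{ij}^2=\alpha^2\|A\|_\fro^2 .
$$

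In both cases the final step is to divide by $\|A\|_\star$. This is legitimate because $A$ has no zero entries, so $\|A\|_\star>0$.

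There is no real obstacle here. The only point needing care is that the statement tacitly assumes $A_{ij}\neq 0$ for all entries. That assumption is what makes the relative quotients meaningful, and it also guarantees that $\|A\|_\star\neq 0$ when we divide at the end.
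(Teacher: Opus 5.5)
Your proof is correct. The paper states this lemma without proof, and your argument is exactly the routine verification it leaves implicit: rewrite the hypothesis as $|A_{ij}-B_{ij}|\le\alpha|A_{ij}|$, then take the maximum over $(i,j)$ or sum the squares. Dividing by $\|A\|_\star>0$ is justified as you note.
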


An $n\times n$ identity matrix is denoted $I_n$ or simply $I$ if the size is evident.
The symbol $\uno$ denotes the all-ones vector 
of appropriate size, $\uno=(1,\ldots,1)^T\in\R^n$. We also write $\uno_n$
to explicitly indicate the size.
Denote as $\vec$ the vectorization operator such that $\mathrm{vec}(A)$ is the vector in $\R^{n^2}$ obtained by stacking the columns of $A$.
Note that $\|A\|_\che = \|\mathrm{vec}(A)\|_\infty$ and 
$\|A\|_\fro = \|\mathrm{vec}(A)\|_2$.
The symbol $\otimes$ denotes the Kronecker product. The following well-known identity establishes a relationship between
matrix multiplication, Kronecker product, and vectorization:
\begin{equation}   \label{eq:ABC}
   \mathrm{vec}(ABC) = (C^T\otimes A)\mathrm{vec}(B) .
\end{equation}
Let $M$ and $N$ two matrices with the same number of columns. The Matlab-style notation $[M;N]$ denotes the matrix obtained by stacking $M$ on top of $N$.
Conversely, if $M$ and $N$ have the same number of rows then $[M\ N]$ indicates the matrix obtained by concatenating the rows of $M$ and $N$.
For a matrix $X\in\Rnn$ with no zero entries we denote as $\hinv{X}\in\Rnn$ the entrywise reciprocal matrix,
$$
   \hinv{X}_{ij} = 1/X_{ij} .
$$
For any vector $x\in\R^n$ we denote $\mathrm{Diag}(x)$ 
the $n\times n$ diagonal matrix with the entries of $x$ in the main diagonal. 

\section{Preliminaries on Cauchy matrices and the recovery of Cauchy points}\label{sec:preliminaries}
A matrix $C \in\Rnn$ is a {\em Cauchy matrix} if
$$
   C_{ij} = \frac{1}{x_i-y_j} 
$$
for real numbers $x_1,\ldots,x_n$ and $y_1,\ldots,y_n$
such that $x_i \neq y_j$ for $i,j=1,\ldots,n$.
We say that the vectors $x = [x_i]_{i=1,\ldots,n}$ and 
$y =  [y_i]_{i=1,\ldots,n}$
are {\em Cauchy points} of the matrix $C$, 
and we adopt the notation $C = \cau(x,y)$.
Note that the Cauchy points are defined up to an additive constant.
In fact, it is not hard to check that $\cau(x,y) = \cau(x+\alpha\uno,y+\alpha\uno)$ for every $\alpha\in\R$. 
Following \cite{liesen2016fast} we say that the Cauchy points $x$ and $y$
of a Cauchy matrix $C$
are {\em normalized} if $\sum_{i=1}^n (x_i^2 + y_i^2)$ is minimal among all possible Cauchy points.
It can be easily verified that, given $C = \cau(x,y)$,
the vectors $\hat x = x -\alpha\uno$ and $\hat y = y -\alpha\uno$
with $\alpha = \sum_i(x_i+y_i)/(2n)$ are the normalized Cauchy points of $C$.

Let $\mathcal{D}\subset \Rnn$ be the set
\begin{equation}   \label{eq:defD}
   \mathcal{D} = \{ x\uno^T - \uno y^T\in\Rnn \, |\, x,~y\in\R^n\} ,
\end{equation}
which is a vector subspace of $\Rnn$ of dimension $2n-1$.
Based on this definition, we can say that a matrix $A \in\Rnn$ is a Cauchy matrix if and only if it has no zero entries
and $\hinv{A}\in\mathcal{D}$.
Let us also introduce the matrix-valued operator
$\Delta:\R^n\times \R^n\mapsto\Rnn$ defined as
$$
   \Delta(x,y) = x\uno^T - \uno y^T .
$$
It holds $\mathrm{Range}(\Delta) = \mathcal{D}$.
Moreover, if $x,y$ are Cauchy points then 
$\cau(x,y) = \hinv{\Delta(x,y)}$.
However, not all matrices in $\mathcal{D}$ are entrywise reciprocals of Cauchy matrices.
We call {\em generators} of a matrix $D\in\mathcal{D}$
any vector pair $(x,y)$ such that $D = \Delta(x,y)$. Furthermore, we say that $(x,y)$ are {\em normalized} if $\sum_{i=1}^n (x_i^2 + y_i^2)$ is minimal.

The following Algorithm \ref{alg:LL1}, borrowed from \cite{liesen2016fast},
recovers the normalized Cauchy points of a given Cauchy matrix.
The computed vectors $x$ and $y$ are 
identified by equating the entries in the first row and column of 
$\Delta(x,y)$ and the entrywise inverse of the input matrix.
If the input matrix $A$ is Cauchy 
then $x$ and $y$ are normalized Cauchy points such that 
$A = \cau(x,y)$. 
However, Algorithm \ref{alg:LL1}
can also be applied to a generic matrix with nonzero entries, in which case 
the vectors $x$ and $y$ are normalized generators of a matrix in $\mathcal{D}$.

\begin{algorithm}[ht]   \label{alg:LL1}
\LinesNumbered 
\SetNlSty{texttt}{}{.} 
\DontPrintSemicolon
\caption{Recovery of normalized generators}   
\KwIn{Matrix $A = (A_{ij})$ with no zero entries}
\KwOut{Normalized generators $x,y$ }
\BlankLine
\For{$i = 1$ \KwTo $n$}
   {
   $y_i = -1/A_{1i}$ \;  
   }
$x_1 = 0$\;
\For{$i = 2$ \KwTo $n$}{
$x_{i} = 1/A_{i1} + y_1$ \;
}
$\alpha = (\sum_{i=1}^n x_i + y_i)/(2n)$\;
$x = x - \alpha\uno$\;
$y = y - \alpha\uno$\;
\end{algorithm}

Now, suppose that $A$ is a perturbed Cauchy matrix, that is, 
$A = B + E$ where $B$ is Cauchy and $E$ is a perturbation matrix
with `small' entries. Aiming at recovering the matrix $B$,
the authors of \cite{liesen2016fast}
propose the Algorithm \ref{alg:LL2} below 
which computes vectors $x$ and $y$ such that 
$\Delta(x,y)$ is the solution of 
\begin{equation}   \label{eq:minF}
   \min_{X\in\mathcal{D}}\|\hinv{A} - X\|_\fro^2 .
\end{equation}

Algorithm \ref{alg:LL2} is obtained by converting \eqref{eq:minF} to a standard least squares problem via vectorization, and deriving an explicit formula for the 
least norm solution.
The correctness of this algorithm is shown in 
\cite[Thm.\ 3.2]{liesen2016fast}, which also provides a necessary and sufficient condition for the inequality $x_i\neq y_j$ to hold for $i,j=1,\ldots,n$. In this case, $\cau(x,y)$ can be considered as an approximation of the hidden Cauchy matrix $B$.
On the other hand, it is also shown in \cite{liesen2016fast}
that these algorithms
may fail to provide Cauchy points
when the data matrix is noisy, since the condition $x_i\neq y_j$ may not be fulfilled for all $i,j=1,\ldots,n$ in some cases. The following theorem shows that,
if $\hinv{A}$ is quite close to $\mathcal{D}$
then $A$ can be approximated by a Cauchy matrix,
and also provides a relative normwise estimate 
of the approximation error,
see \cite[Thm.\ 3.5]{liesen2016fast}.

\begin{algorithm}[ht]   \label{alg:LL2}
\LinesNumbered 
\SetNlSty{texttt}{}{.} 
\DontPrintSemicolon
\caption{Normalized generators from the solution of \eqref{eq:minF}} 
\KwIn{Matrix $A = (A_{ij})$ with no zero entries}
\KwOut{Normalized generators $x,y$ }
\BlankLine
$r = \hinv{A}\uno/n$\;
$c = {\hinv{A}}^T\uno/n$\;
$\alpha = (\sum_{i=1}^n r_i)/(2n)$\;
$x = r - \alpha\uno$\;
$y = \alpha\uno - c$\;
\end{algorithm}

\begin{theorem}   \label{thm:LL1}
Let $A\in\R^{n\times n}$ be a matrix with no zero entries and let 
$D \in\mathcal{D}$ 
be a matrix such that 
\begin{equation}   \label{eq:hyp}
   | 1 - A_{ij}D_{ij}| \leq \beta < 1 
\end{equation}
for $i,j = 1,\ldots,n$.
Then $D =\Delta(x,y)$ with $\min_{i,j}|x_i - y_j| \geq (1-\beta)/\|A\|_\che$. Moreover, if $C = \cau(x,y)$ then
$$
   \frac{\|A - C\|_\star}{\|A\|_\star} \leq \frac{\beta}{1-\beta} .
$$
\end{theorem}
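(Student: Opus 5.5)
Since $D\in\mathcal{D}$, we can write $D=\Delta(x,y)$ for some generators $x,y$, so that $D_{ij}=x_i-y_j$. The proof works entrywise from hypothesis \eqref{eq:hyp} and then passes to norms through Lemma~\ref{lem:easy}.

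First we show that no entry of $D$ is small. From $|1-A_{ij}D_{ij}|\le\beta<1$ we get $|A_{ij}D_{ij}|\ge 1-\beta>0$. In particular $D_{ij}\neq 0$ for all $i,j$, that is, $x_i\neq y_j$. Moreover
$$
|x_i-y_j| = |D_{ij}| \ge \frac{1-\beta}{|A_{ij}|} \ge \frac{1-\beta}{\|A\|_\che}.
$$
Taking the minimum over $i,j$ gives the first claim. It also shows that $C=\cau(x,y)=\hinv{D}$ is well defined.

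Next we bound the relative error of each entry. Since $C_{ij}=1/D_{ij}$,
$$
\frac{|A_{ij}-C_{ij}|}{|A_{ij}|} = \frac{|A_{ij}-1/D_{ij}|}{|A_{ij}|} = \frac{|A_{ij}D_{ij}-1|}{|A_{ij}D_{ij}|} \le \frac{\beta}{1-\beta},
$$
where the numerator is bounded using \eqref{eq:hyp} and the denominator using the lower bound $|A_{ij}D_{ij}|\ge 1-\beta$ from the previous step.

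Finally, we apply Lemma~\ref{lem:easy} with $B=C$ and $\alpha=\beta/(1-\beta)$. This turns the entrywise bound into $\|A-C\|_\star/\|A\|_\star\le \beta/(1-\beta)$, valid for both the maximum norm and the Frobenius norm.

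No step is a real obstacle. The only point needing care is to use the reverse triangle inequality $|A_{ij}D_{ij}|\ge 1-|1-A_{ij}D_{ij}|$, both to guarantee $D_{ij}\neq0$ and to obtain the denominator bound.
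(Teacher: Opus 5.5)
Your proof is correct and follows the route the paper indicates. You derive the entrywise bound $|A_{ij}-C_{ij}|/|A_{ij}|\le\beta/(1-\beta)$ from \eqref{eq:hyp} and pass to both norms via Lemma~\ref{lem:easy}. You also prove the lower bound on $|x_i-y_j|$ explicitly with the reverse triangle inequality, where the paper only cites it from Liesen and Luce.
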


This theorem has been proven in \cite{liesen2016fast} in the $\star = \fro$ case.
However, the original proof incidentally shows that
$$
   \frac{|A_{ij} - C_{ij}|}{|A_{ij}|} \leq \frac{\beta}{1-\beta} ,
$$
which is easily deduced from \eqref{eq:hyp}.
By this inequality and Lemma \ref{lem:easy}, we can conclude that Theorem \ref{thm:LL1} is also true in the Chebyshev norm.  
The first part of the claim  
shows that the vectors $x$ and $y$ 
are Cauchy points, whereas the last part
provides a relative normwise approximation error of $C$ with respect to $A$. As $C_{ij} = 1/D_{ij}$, the leftmost term in \eqref{eq:hyp} can be rewritten as
\begin{equation}   \label{eq:rearranged}
   | 1 - A_{ij}D_{ij}| = |A_{ij}| \bigg| \frac{1}{A_{ij}}
   - \frac{1}{C_{ij}} \bigg| 
   = \frac{|C_{ij} - A_{ij}|}{|C_{ij}|}.
\end{equation}
Thus the constant $\beta$ in \eqref{eq:hyp} is a bound
on the entrywise relative error between the matrices $\hinv{A}$ and $\hinv{C}$ or, equivalently, 
between $C$ and $A$.
In passing, we note that Theorem \ref{thm:LL1} holds
for every matrix $D\in\mathcal{D}$ that fulfils the hypothesis \eqref{eq:hyp}, not just the matrix attaining the minimum in \eqref{eq:minF}.
Furthermore, it is not difficult to complement Theorem \ref{thm:LL1} with the following result, which provides a sort of stability estimate for the recovery of the matrix $C$.

\begin{corollary}
In the same hypotheses and notations of Theorem \ref{thm:LL1}, it holds
$\|C\|_\star\leq \|A\|_\star/(1-\beta)$, 
where $\|\cdot\|_\star$ denotes either the Frobenius or the Chebyshev norm.
\end{corollary}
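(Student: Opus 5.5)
The plan is to get an entrywise bound $|C_{ij}|\le |A_{ij}|/(1-\beta)$ from the hypothesis \eqref{eq:hyp}, and then pass to either norm by monotonicity. Since both the Frobenius and the Chebyshev norm depend only on the absolute values of the entries and are monotone in them, the entrywise bound gives $\|C\|_\star\le \|A\|_\star/(1-\beta)$ at once.

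For the entrywise bound I will use the rearrangement \eqref{eq:rearranged}. By Theorem \ref{thm:LL1}, $D=\Delta(x,y)$ has no zero entries, so $C_{ij}=1/D_{ij}$ is well defined. The hypothesis then reads $|C_{ij}-A_{ij}|\le \beta|C_{ij}|$. The triangle inequality gives
$$
   |C_{ij}| \le |A_{ij}| + |C_{ij}-A_{ij}| \le |A_{ij}| + \beta |C_{ij}| .
$$
Since $\beta<1$, this rearranges to $(1-\beta)|C_{ij}|\le |A_{ij}|$, which is the claim.

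Equivalently, one can write $|A_{ij}D_{ij}|\ge 1-|1-A_{ij}D_{ij}|\ge 1-\beta>0$, so $|C_{ij}|=1/|D_{ij}|\le |A_{ij}|/(1-\beta)$.

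The final step is routine. For the Chebyshev norm, take the maximum over $i,j$ of the entrywise bound. For the Frobenius norm, square the entrywise bound and sum over $i,j$. There is no real obstacle; the only point to check is that $D_{ij}\neq 0$, so that $C$ is defined, and this is exactly the first part of Theorem \ref{thm:LL1} (or it follows directly from $|A_{ij}D_{ij}|\ge 1-\beta>0$).
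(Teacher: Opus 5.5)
Your proof is correct and follows essentially the same route as the paper: both use \eqref{eq:rearranged} to get $|C_{ij}-A_{ij}|\le\beta|C_{ij}|$, then the triangle inequality yields $(1-\beta)|C_{ij}|\le|A_{ij}|$, and monotonicity of both norms finishes the argument. Your extra check that $D_{ij}\neq 0$ (from $|A_{ij}D_{ij}|\ge 1-\beta>0$), which makes $C$ well defined, is a welcome detail that the paper leaves implicit.
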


\begin{proof}
    Using \eqref{eq:hyp} and \eqref{eq:rearranged},
    for all $i,j=1,\ldots,n$
    we have 
$$
   \beta |C_{ij}| \geq |C_{ij} - A_{ij}|
   \geq |C_{ij}| - |A_{ij}| .
$$
Thus $|A_{ij}| \geq |C_{ij}| (1-\beta)$,
and the claim follows.
\end{proof}

\section{A general family of algorithms for Cauchy parameter recovery}\label{sec:generalfamily}

In this section, we uncover a common structure of the two algorithms in 
the preceding section. This structure allows us to devise a parametrized
algorithm for the approximation of a perturbed Cauchy matrix,
which includes Algorithm \ref{alg:LL1} and \ref{alg:LL2}
as particular cases.
Furthermore, we provide {\em a priori} error bounds on the 
approximation computed by this algorithm.
A close look at Algorithms \ref{alg:LL1} and \ref{alg:LL2} reveals that they implement linear projectors onto $\mathcal{D}$. 
To reveal the common structure of these projectors we introduce the 
matrix function $\Phi:\R^{n\times n}\mapsto \R^{n\times n}$ given by 
$\Phi(X) = X - MXN^T$ for some auxiliary matrices $M$ and $N$.

\begin{theorem}   \label{thm:vw}
The matrix function $\Phi(X) = X - MXN^T$ is a projector onto $\D\subset\Rnn$
if and only if
there exist $v,w\in\R^n$ such that 
$\uno^Tv = \uno^Tw = 1$, $M = I - \uno v^T$
and $N = I - \uno w^T$.
\end{theorem}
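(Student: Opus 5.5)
The plan is to read ``$\Phi$ is a projector onto $\D$'' as: $\Phi$ is linear, $\Phi(X)=X$ for every $X\in\D$, and $\mathrm{Range}(\Phi)\subseteq\D$. These two conditions together give $\Phi^2=\Phi$ and $\mathrm{Range}(\Phi)=\D$. Both directions then reduce to short computations with $\uno$, plus one Kronecker-product argument for necessity.

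\emph{Sufficiency.} Let $M=I-\uno v^T$ and $N=I-\uno w^T$ with $\uno^Tv=\uno^Tw=1$.
\begin{itemize}
\item Since $\uno^Tv=\uno^Tw=1$, we get $M\uno=N\uno=0$.
\item For $X=\Delta(x,y)=x\uno^T-\uno y^T$ this gives $MXN^T=Mx(N\uno)^T-(M\uno)y^TN^T=0$, so $\Phi(X)=X$ on $\D$.
\item For an arbitrary $X$, expanding the product yields
$$
\Phi(X)=\uno v^TX+Xw\uno^T-(v^TXw)\uno\uno^T=\Delta\big(Xw-(v^TXw)\uno,\,-X^Tv\big)\in\D .
\end{itemize}
Hence $\Phi$ is idempotent with range exactly $\D$.

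\emph{Necessity.} Assume $\Phi$ fixes $\D$ and maps into $\D$.

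First, $M$ and $N$ are nonzero. If $M=0$ or $N=0$, then $\Phi=I$, whose range is all of $\Rnn$. This is strictly larger than $\D$, since $\dim\D=2n-1<n^2$ for $n\ge2$; I assume $n\ge 2$ throughout.

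Second, $M\uno=N\uno=0$. Testing the fixed-point property on $x\uno^T$ gives $Mx(N\uno)^T=0$ for all $x$. Since $M\neq0$, this forces $N\uno=0$. Symmetrically, testing on $\uno y^T$ forces $M\uno=0$.

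Third, I exploit the range condition through the orthogonal projector $\Pi=I-\uno\uno^T/n$. The key observation is that $\D=\{X:\Pi X\Pi=0\}$. Indeed, $\Pi X\Pi=0$ means every column of $X\Pi$ lies in $\mathrm{span}(\uno)$, so $X\Pi=\uno b^T$. Then $X=X\Pi+(X\uno/n)\uno^T$ has the required form $a\uno^T+\uno b^T$.

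The range condition therefore says $\Pi X\Pi=\Pi MXN^T\Pi$ for all $X$. Applying \eqref{eq:ABC}, this becomes the operator identity
$$
\Pi\otimes\Pi=(\Pi N)\otimes(\Pi M).
$$

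The step I expect to be the main obstacle is extracting $M$ and $N$ from this Kronecker identity. Here I use the standard fact that $A\otimes B=C\otimes D\neq0$ implies $C=\lambda A$ and $D=\lambda^{-1}B$ for some scalar $\lambda\neq0$. This follows by comparing blocks: each block $c_{ij}D$ equals $a_{ij}B$. Applying it gives $\Pi M=\lambda^{-1}\Pi$ and $\Pi N=\lambda\Pi$ for some $\lambda\neq0$.

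From $\Pi M=\lambda^{-1}\Pi$ we get $M=\lambda^{-1}\Pi+\uno a^T$ for some vector $a$. The condition $M\uno=0$ then gives $a^T\uno=0$. Consequently
$$
M=\lambda^{-1}(I-\uno v^T),\qquad v=\uno/n-\lambda a ,
$$
and $\uno^Tv=1$. In the same way, $N=\lambda(I-\uno w^T)$ with $\uno^Tw=1$.

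The factors $\lambda^{-1}$ and $\lambda$ cancel in $MXN^T$. So $\Phi$ coincides with the map built from $I-\uno v^T$ and $I-\uno w^T$. Strictly speaking, the pair $(M,N)$ is therefore determined only up to the harmless rescaling $(\lambda^{-1}M,\lambda N)$. I would state explicitly that ``there exist $v,w$'' is to be understood modulo this rescaling, which does not change $\Phi$, or equivalently that one may normalize $\lambda=1$.
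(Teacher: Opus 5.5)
Your argument is correct for $n\ge 2$, and it follows a genuinely different route from the paper's. The paper sets $M=I-A$, $N=I-B$ and starts from the range condition alone. It plugs in $X=ze_i^T$, multiplies by $e_i-e_j$, and deduces that a combination of $A$ and $I$, with coefficient $\xi_{ij}=B_{ii}-B_{ji}$, maps $\R^n$ into $\mathrm{span}(\uno)$. From this it asserts $A=\uno v^T$ and $\xi_{ij}=0$ (so $B=\uno w^T$), and only at the end uses $\Phi|_{\D}=\mathrm{id}$ to get $\uno^Tv=\uno^Tw=1$. You instead use the fixed-point property first to obtain $M\uno=N\uno=0$. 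You then recast $\D$ as the kernel of $X\mapsto\Pi X\Pi$ and compress the range condition into the single identity $\Pi\otimes\Pi=(\Pi N)\otimes(\Pi M)$, so that uniqueness of Kronecker factors up to a scalar does all the work. The paper's route is more elementary: entrywise tests and no factorization lemma. Yours is cleaner and, more importantly, exhibits the exact solution set: $(M,N)$ is determined only up to $(\lambda^{-1}M,\lambda N)$. Your closing caveat is therefore a real correction of the statement, not pedantry. With $M=2(I-\uno v^T)$ and $N=\tfrac12(I-\uno w^T)$ the map $\Phi$ is unchanged, yet $M$ cannot be written as $I-\uno u^T$ when $n\ge 2$, so the ``only if'' direction is false as literally stated. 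In the paper's proof this is exactly the step left unjustified. For the rescaled pair one gets $\xi_{ij}=1-\lambda\neq 0$, and $A=(1-\lambda^{-1})I+\lambda^{-1}\uno v^T$ is not rank one. Incidentally, the correct combination there is $(1-\xi_{ij})A+\xi_{ij}I$, which equals $\uno v^T$ for every $\lambda$.
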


\begin{proof} 
Let $A,B\in\Rnn$ be such that $M=I-A$, $N=I-B$. Suppose first that
for any $X\in\mathbb{R}^{n\times n}$
there exist $x,y\in\R^n$ such that 
\begin{equation}\label{eq:block}
    X - MXN^T
    =x\uno^T - \uno y^T .
\end{equation}
The left-hand side of \eqref{eq:block}
can be written as $AX + (I-A)XB^T$.
Let $i,j\in\{1,\ldots,n\}$ with $i\neq j$ be fixed,
and consider the matrix $X = ze_i^T$ where $z\in\R^n$ is arbitrary.
Multiplying both sides of \eqref{eq:block} by $e_i-e_j$ and simplifying, we obtain
on the left-hand side
\begin{align*}
   AX(e_i-e_j) + (I-A)XB^T(e_i-e_j) 
   & = Az + (I-A)ze_i^T B^T(e_i-e_j) \\
   & = Az + \xi_{ij}(I-A)z \\
   & = [(1-\xi_{ij})A - \xi_{ij}I]z ,
\end{align*}
with $\xi_{ij} = B_{ii} - B_{ji}$ and 
$$
   (x\uno^T - \uno y^T)(e_i-e_j) = \uno (y_j-y_i) 
$$
on the right-hand side. Note that $\xi_{ij}$ does not depend on $z$.
Since $z$ is arbitrary, we conclude that
$(1-\xi_{ij})A - \xi_{ij}I$ is a rank-one matrix
whose image is $\mathrm{Span}(\uno)$ for any $i\neq j$.
But then $A = \uno v^T$ for some $v\in\R^n$.
Moreover, $0 = \xi_{ij} = B_{ii} - B_{ji}$, whence
$B = \uno w^T$ for some $w\in\R^n$.
Finally, 
imposing that $\Phi$ is the identity on $\mathcal{D}$ we obtain
$$
   x\uno^T - \uno y^T = \Phi(x\uno^T - \uno y^T) = 
   \uno v^T(x\uno^T - \uno y^T) + (I-\uno v^T)
   (x\uno^T - \uno y^T) w\uno^T ,
$$
which must be true for any choice of $x,y\in\R^n$. 
Using some algebra, we eventually arrive at the identities
$v^T\uno = 1$ and $w^T\uno = 1$, which proves one part of the claim.

The converse implication is simpler to prove.
Indeed, if $M$ and $N$ are as in the  hypotheses then
$$
   \Phi(X) = \uno v^T X
   + Xw\uno^T -
   \uno v^TX w\uno^T
   = x\uno - \uno y^T \in\D ,
$$
where we set $x = Xw$ and $y = X^Tv - (v^TXw)\uno$.
Furthermore, if $X = x\uno^T - \uno y^T$ then,
after some simplification,
\begin{align*}
   \Phi(X) & = \uno v^T (x\uno^T - \uno y^T)
   + (x\uno^T - \uno y^T)w\uno^T -
   \uno v^T(x\uno^T - \uno y^T)w\uno^T \\
   & = \ldots = x\uno^T - \uno y^T = X ,
\end{align*}
and the proof is complete. 
\end{proof}

As we will show shortly after, Theorem \ref{thm:vw} allows us to generalize 
Algorithms \ref{alg:LL1} and \ref{alg:LL2} employing any one of the projectors
described there and recovering the Cauchy points
from one row and column
of $\Phi(A^{[-1]})$.
To this goal, we introduce the following notation. Let $u\in\mathbb{R}^n$ and define the matrix
\begin{equation}   \label{eq:M_u}
M_u=I - \uno u^T\in\mathbb{R}^{n\times n}.
\end{equation}

\begin{theorem}   \label{thm:1+2}
Let $A\in\Rnn$ be a matrix with no zero entries.
\begin{itemize}
    \item Let $\Phi_1(X) = X - M_{e_1}XM_{e_1}^T$ and let $x,y$ be the vectors computed by Algorithm \ref{alg:LL1}
with input $A$. Then $\Delta(x,y) = \Phi_1(\hinv{A})$.
\item 
Let  $\Phi_2(X) = X - M_{\uno/n}XM_{\uno/n}^T$. Then $\Phi_2$ is the projector
onto $\mathcal{D}$ orthogonal with respect to the Frobenius inner product
$\langle X,Y\rangle = \mathrm{trace}(Y^TX)$.
Moreover, if $x,y$ are the vectors computed by Algorithm \ref{alg:LL2}
with input $A$ then $\Delta(x,y) = \Phi_2(\hinv{A})$.
\end{itemize}
\end{theorem}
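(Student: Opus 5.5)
Both parts reduce to an explicit formula for $\Phi$ read off from Theorem \ref{thm:vw}. For $M=M_v$, $N=M_w$ with $\uno^Tv=\uno^Tw=1$, the converse part of that proof shows
$$
   \Phi(X) = \uno v^TX + Xw\uno^T - \uno v^TXw\uno^T = \Delta\big(Xw,\; X^Tv-(v^TXw)\uno\big).
$$
For each item I will specialize $v,w$, compute the generators $Xw$ and $X^Tv-(v^TXw)\uno$ with $X=\hinv{A}$, and compare them with the output of the algorithm. The comparison is made modulo a common shift $\gamma\uno$, since $\Delta(x+\gamma\uno,y+\gamma\uno)=\Delta(x,y)$; the normalization steps in the algorithms are exactly such shifts.

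For $\Phi_1$, take $v=w=e_1$, which is admissible since $\uno^Te_1=1$. With $X=\hinv{A}$ the generators are
$$
   x' = Xe_1 = (1/A_{i1})_i, \qquad y' = X^Te_1 - X_{11}\uno = \big(1/A_{1j}-1/A_{11}\big)_j.
$$
Algorithm \ref{alg:LL1}, before its final shift, produces $y_j=-1/A_{1j}$, $x_1=0$, and $x_i=1/A_{i1}-1/A_{11}$ for $i\ge2$ (using $y_1=-1/A_{11}$). These satisfy $x = x'-\uno/A_{11}$ and $y=-y'-\uno/A_{11}$. The sign of $y$ is reversed, so I will recheck the convention by evaluating entrywise. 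Algorithm \ref{alg:LL1} intends $x_i-y_j=1/A_{ij}$ on the first row and column, which forces $y_j=x_1-1/A_{1j}$. Hence $\Delta(x,y)$ has the same first row and column as $\hinv{A}$. Since $\Phi_1(X)$ also agrees with $X$ on the first row and column (from $M_{e_1}XM_{e_1}^T$ having zero first row and column), and a matrix in $\D$ is determined by its first row and column ($D_{ij}=D_{i1}+D_{1j}-D_{11}$), the two coincide. This first-row/column argument is the cleanest route, and it sidesteps any sign bookkeeping in the algorithm's listing. The final shift by $\alpha\uno$ does not change $\Delta$.

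For $\Phi_2$, take $v=w=\uno/n$. It is a projector onto $\D$ by Theorem \ref{thm:vw}, so orthogonality only requires self-adjointness in the Frobenius inner product. Write $P=\uno\uno^T/n$, which is symmetric, so that $M_{\uno/n}=I-P$ is symmetric. Then
$$
   \langle \Phi_2(X),Y\rangle = \mathrm{trace}(Y^TX) - \mathrm{trace}\big(Y^T(I-P)X(I-P)\big),
$$
and cyclicity of the trace together with symmetry of $I-P$ gives $\mathrm{trace}(Y^T(I-P)X(I-P)) = \mathrm{trace}(((I-P)Y(I-P))^TX)$. Hence $\langle\Phi_2(X),Y\rangle=\langle X,\Phi_2(Y)\rangle$, and a self-adjoint idempotent is an orthogonal projector.

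For the generators, set $r=\hinv{A}\uno/n$, $c=\hinv{A}^T\uno/n$ and $s=\uno^T\hinv{A}\uno/n^2=\uno^Tr/n$. The formula gives $x'=r$ and $y'=c-s\uno$. Again I will verify through entries rather than trusting the signs in the algorithm listing. We have $(\Phi_2 X)_{ij}=r_i+c_j-s$, while Algorithm \ref{alg:LL2} gives $x_i-y_j = r_i+c_j-2\alpha$ with $2\alpha=\uno^Tr/n=s$. These agree.

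The only real obstacle is the bookkeeping of signs and shifts in the algorithms. Comparing entries of $\Delta(x,y)$ directly, rather than comparing generator vectors, resolves it, because shifts cancel in $x_i-y_j$.
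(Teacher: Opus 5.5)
Your proof is correct. For the first item and for the orthogonality claim it is essentially the paper's argument. The paper computes $\Phi_1(Z)_{ij}=Z_{1j}+Z_{i1}-Z_{11}$ directly, which is the same identity $D_{ij}=D_{i1}+D_{1j}-D_{11}$ you use to say that a matrix in $\D$ is determined by its first row and column. The paper also verifies $\langle X-\Phi_2(X),D\rangle=\langle X,D-\Phi_2(D)\rangle=0$ for $D\in\D$, which is the self-adjointness you obtain from trace cyclicity. You get idempotency from Theorem~\ref{thm:vw}, whereas the paper checks $\Phi_2(D)=D$ by hand.

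The real difference is the final claim about Algorithm~\ref{alg:LL2}. The paper computes nothing there. It argues that $\Phi_2(\hinv{A})$ attains the minimum in \eqref{eq:minF} ``exactly as the output of Algorithm~\ref{alg:LL2}''. This relies on the correctness result of \cite{liesen2016fast} together with uniqueness of the orthogonal projection. Your entrywise check $x_i-y_j=r_i+c_j-2\alpha=r_i+c_j-s=(\Phi_2(\hinv{A}))_{ij}$ is self-contained. Combined with the orthogonality, it actually re-proves that Algorithm~\ref{alg:LL2} solves \eqref{eq:minF}.

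One correction to your bookkeeping. The sign mismatch you noticed is not in the algorithm listings, which are consistent: $x_1-y_j=1/A_{1j}$ and $x_i-y_1=1/A_{i1}$ hold literally for Algorithm~\ref{alg:LL1}. The mismatch comes from the generator formula $y=X^Tv-(v^TXw)\uno$ you took from the proof of Theorem~\ref{thm:vw}. Since $\uno v^TX=\uno(X^Tv)^T$ has to match the term $-\uno y^T$, the correct formula is $y=(v^TXw)\uno-X^Tv$.

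With it, your original plan works without detours. For $\Phi_1$, Algorithm~\ref{alg:LL1} returns $x'-\uno/A_{11}$ and $y'-\uno/A_{11}$ before normalization. For $\Phi_2$ one has $y'=s\uno-c$ (not $c-s\uno$), so Algorithm~\ref{alg:LL2} returns $x'-\alpha\uno$ and $y'-\alpha\uno$, because $s=2\alpha$. Your entrywise computations never use the faulty formula, so the argument stands, but the line $y'=c-s\uno$ should be fixed.
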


\begin{proof}
Let $A$, $x$ and $y$ be as in the first part of the claim.
Let $Z = \hinv{A}$
and $B = \Phi_1(Z)$ for notational convenience.
With simple passages,
$$
   B = Z - (I - \uno e_1^T)Z(I - e_1\uno^T) 
   = \uno e_1^TZ + Ze_1\uno^T - Z_{11}\uno\uno^T .
$$
Recall that the first row and column of $\Delta(x,y)$ coincide with those of $Z$. From the relations $y_j = -Z_{1j}$ and $x_i = Z_{i1} - Z_{11}$,
for $i,j=1\ldots,n$ we have
$$
   B_{ij} = Z_{1j} + Z_{i1} - Z_{11} = x_i - y_j.
$$
Thus $B = \Delta(x,y)$.
In particular, the first row and column of $B$ 
coincide with those of $\hinv{A}$. 
This proves the first claim.

For the second part of the claim, let $D = x\uno^T - \uno y^T \in\D$ be arbitrary. Then
the identity $\Phi_2(D) = D$ can be derived by elementary manipulations.
Finally, consider the inner product
$\langle X - \Phi(X),D\rangle$ for arbitrary $X\in\Rnn$ and $D\in\D$.
We have 
\begin{align*}
   \langle X - \Phi(X),D\rangle 
   & = \frac1n \langle \uno\uno^TX + X \uno\uno^T - 
   \uno\uno^TX\uno\uno^T /n , D \rangle \\
   & = \frac1n \langle X , \uno\uno^TD + D \uno\uno^T
   - \uno\uno^T D \uno\uno^T/n \rangle \\
   & = \langle X , D - \Phi(D) \rangle = 0 .
\end{align*}
Thus the residual $X - \Phi_2(X)$ is orthogonal to $\D$,
proving that $\Phi_2$ is an orthogonal projector onto $\D$.
In particular, $\Phi_2(\hinv{A})$ attains the minimum in \eqref{eq:minF}, exactly as the output of Algorithm \ref{alg:LL2}, thus proving the last part of the claim.
\end{proof}

Theorem \ref{thm:1+2} characterizes the result of Algorithm \ref{alg:LL1} in terms of the matrix function $\Phi_1(X) = X - MXM^T$
with $M = I-\uno e_1^T$, which is a projector onto $\D$. 
Also
Algorithm \ref{alg:LL2}, which computes the matrix in $\D$ that is the closest in Frobenius norm to a given matrix, is described in terms of 
the projector $\Phi_2$.
It is then natural to ask if we can devise other algorithms for reconstructing a Cauchy approximation of $A$ that correspond to different 
projectors onto $\D$.
Also, recall that Algorithm \ref{alg:LL1} only uses information from the first row and column of $A$, thus achieving linear complexity, whereas it might be useful to use information from the other matrix elements as well, even at an increased computational cost.

Algorithm \ref{alg:generaluv} here below
computes the 
normalized generators of the matrix obtained 
from a generic projector from Theorem \ref{thm:vw}.
The vectors $v,w\in\R^n$ that characterize the projector
are given in input to the algorithm, together with the data matrix $A$.
The correctness of the algorithm is shown in 
Lemma \ref{lem:example}, and {\it a priori} bounds on the 
approximation error are given in Theorem \ref{thm:kappastargen}.
The computational cost is, in general, $O(n^2)$, but the algorithm is well-suited for parallel implementation, with a cost of $O(n)$ per processor.

\begin{algorithm}[ht]   \label{alg:generaluv}
\LinesNumbered 
\SetNlSty{texttt}{}{.} 
\DontPrintSemicolon
\caption{Recovery of normalized generators -- variant with parametrized projectors}   
\KwIn{Matrix $A = (A_{ij})$ with no zero entries, vectors $v$ and $w$ such that $v^T\uno=1$, $w^T\uno=1$}
\KwOut{Normalized generators $x,y$ }
\BlankLine
\For{$i = 1$ \KwTo $n$}
   {
   $y_i = -\sum_{k=1}^n v_k/A_{ki}$ \;  
   }
$\theta=\sum_{k=1}^n y_k w_k$\;
\For{$i = 1$ \KwTo $n$}{
$x_{i} = \theta + \sum_{k=1}^n w_k/A_{ik}$ \;
}
$\alpha = (\sum_{i=1}^n x_i + y_i)/(2n)$\;
$x = x - \alpha\uno$\;
$y = y - \alpha\uno$\;
\end{algorithm}

\begin{lemma}   \label{lem:example}
Let $v,w\in\mathbb{R}^n$ such that $v^T\uno=w^T\uno =1$.
Let $A\in\Rnn$ be a matrix with no zero entries,
and let $x,y$ be the vectors computed by Algorithm \ref{alg:generaluv}
with input $A$. Then 
$\Delta(x,y) = \hinv{A} - M_v\hinv{A}M_w^T$.
In particular, if $A$ is Cauchy then $A = \cau(x,y)$.
\end{lemma}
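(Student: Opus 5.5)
Set $Z = \hinv{A}$ and expand the projector exactly as in the proof of Theorem \ref{thm:vw}:
$$
   Z - M_vZM_w^T = \uno v^TZ + Zw\uno^T - (v^TZw)\,\uno\uno^T .
$$
The $(i,j)$ entry of the right-hand side is $(Z^Tv)_j + (Zw)_i - v^TZw$. I will show that the vectors produced by Algorithm \ref{alg:generaluv} generate exactly this matrix. The normalization shift at the end is harmless, since $\Delta(x-\alpha\uno,y-\alpha\uno)=\Delta(x,y)$.

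First I read off the intermediate quantities from the algorithm, before the shift by $\alpha$. We have $y_i = -\sum_k v_k Z_{ki}$, so $y = -Z^Tv$. Then $\theta = w^Ty = -v^TZw$. Finally $x_i = \theta + \sum_k w_k Z_{ik}$, so $x = Zw - (v^TZw)\uno$. Consequently
$$
   x_i - y_j = (Zw)_i - v^TZw + (Z^Tv)_j ,
$$
which matches the entry computed above. Hence $\Delta(x,y) = Z - M_vZM_w^T$, and the same holds after subtracting $\alpha\uno$ from both vectors.

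It remains to check the normalization claim implicit in the output. I will show that $\alpha=\sum_i(x_i+y_i)/(2n)$ makes $\sum_i (x_i^2+y_i^2)$ minimal over all generator pairs. The generators of a given $D\in\D$ are exactly $(x+t\uno, y+t\uno)$ for $t\in\R$, since $\Delta$ has kernel spanned by $(\uno,\uno)$. Minimizing the quadratic $\sum_i\bigl((x_i+t)^2+(y_i+t)^2\bigr)$ in $t$ gives $t=-\alpha$, exactly as remarked in Section \ref{sec:preliminaries}.

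For the final statement, suppose $A$ is Cauchy. Then $Z=\hinv{A}\in\D$. By Theorem \ref{thm:vw}, $\Phi(X)=X-M_vXM_w^T$ is a projector onto $\D$, so it fixes $Z$, and therefore $\Delta(x,y)=Z$. Since $A$ has no zero entries and $Z$ is its entrywise reciprocal, every $x_i-y_j = Z_{ij}$ is nonzero. Thus $x,y$ are Cauchy points and $\cau(x,y)=\hinv{Z}=A$.

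The only step needing care is the sign bookkeeping for $\theta$: the term $-\uno\uno^T(v^TZw)$ must be absorbed into $x$ through $\theta=w^Ty$. This works precisely because $v^T\uno=w^T\uno=1$ is what makes $\Phi$ a projector. Everything else is direct substitution.
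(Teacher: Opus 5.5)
Your proof is correct and follows the paper's argument: the same expansion $\hinv{A}-M_v\hinv{A}M_w^T=\uno v^T\hinv{A}+\hinv{A}w\uno^T-(v^T\hinv{A}w)\uno\uno^T$, the same identification of $y=-\hinv{A}^Tv$, $\theta=-v^T\hinv{A}w$, $x=\theta\uno+\hinv{A}w$ from the algorithm, and, for the Cauchy case, the fact that the map fixes $\D$ (you cite Theorem \ref{thm:vw}, while the paper derives it directly from $M_v\uno=M_w\uno=0$). One small correction to your closing remark: the identity $\Delta(x,y)=\hinv{A}-M_v\hinv{A}M_w^T$ and the bookkeeping for $\theta$ hold for arbitrary $v,w$, so the constraint $v^T\uno=w^T\uno=1$ is needed only for the final assertion that $A=\cau(x,y)$ when $A$ is Cauchy.
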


\begin{proof}
From the definition of $M_v$ and $M_w$ we have
\begin{align*}
   \hinv{A} - M_v\hinv{A}M_w^T 
   & = \hinv{A} - (I-\uno v^T)\hinv{A}(I- w \uno^T) \\
   & = \uno v^T \hinv{A} + \hinv{A}w\uno^T - \uno v^T\hinv{A}w\uno^T.
\end{align*}
The entry in position $(i,j)$ is given by
\begin{align*}
   [\hinv{A} - M_v\hinv{A}M_w^T]_{i,j} 
   & = e_i^T (\hinv{A} - M_v\hinv{A}M_w^T) e_j \\
   & = e_i^T\uno v^T \hinv{A}e_j + e_i^T\hinv{A}w\uno^T e_j - e_i^T\uno v^T\hinv{A}w\uno^T e_j  \\
   & = e_i^T\hinv{A}w - v^T\hinv{A}w 
   + v^T \hinv{A}e_j.
\end{align*}
Now, let
$$ 
   y = - {\hinv{A}}^T v 
   , \qquad 
   \theta = - v^T \hinv{A} w , \qquad
   x = \theta \uno + \hinv{A} w . 
$$
Note that these quantities are those computed in lines 2, 3, and 5 of Algorithm \ref{alg:generaluv}, respectively.
Therefore we have
$$
[\hinv{A} - M_v\hinv{A}M_w^T]_{i,j} = x_i - y_j,
$$
with $x_i$, $y_j$ computed as in Algorithm \ref{alg:generaluv}.
The last assertion follows immediately from the property $M_v\uno = M_w\uno = 0$.     
\end{proof}

Hereafter we make use of the best approximation measure
$$
   \kappa_\star(A) = \min_{X\in \mathcal{D}}\|\hinv{A}-X\|_\star ,
$$
where $\|\,\cdot\,\|_\star$ stands for either the Chebyshev norm
or the Frobenius norm, according to whether $\star = \che$ or $\star = \fro$, respectively.
Recall that Algorithm \ref{alg:LL2} computes the solution 
of \eqref{eq:minF}, hence
$\kappa_\fro(A)$ can be obtained explicitly from that solution.
For later reference, we state in the next lemma a permutational invariance property
of this measure, whose trivial proof is omitted for brevity.

\begin{lemma}   \label{lem:permut}
Let $P,Q$ be permutation matrices. If $A$ has only nonzero entries then
$\kappa_\star(PAQ) = \kappa_\star(A)$.
\end{lemma}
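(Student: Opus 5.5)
The plan is to show that the subspace $\mathcal{D}$ is mapped onto itself by the transformation $X\mapsto PXQ$, and that this transformation preserves both norms $\|\cdot\|_\star$. The measure $\kappa_\star$ is a distance to $\mathcal{D}$, so the equality will follow from a change of variables in the minimization.

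First I note that entrywise reciprocation commutes with permutations: $\hinv{(PAQ)} = P\hinv{A}Q$. This holds because $PAQ$ only rearranges the entries of $A$; in particular $PAQ$ again has no zero entries, so $\kappa_\star(PAQ)$ is well defined.

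Next I check that $P\mathcal{D}Q=\mathcal{D}$. For $X=x\uno^T-\uno y^T$ we have $P\uno=\uno$ and $\uno^TQ=\uno^T$, hence
$$
   PXQ = (Px)\uno^T - \uno (Q^Ty)^T \in\mathcal{D}.
$$
The map $X\mapsto PXQ$ is invertible, with inverse $X\mapsto P^TXQ^T$, and the inverse has the same form. So the map is a bijection of $\mathcal{D}$ onto itself.

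Both the Chebyshev norm and the Frobenius norm depend only on the multiset of entries. Therefore $\|PYQ\|_\star=\|Y\|_\star$ for every $Y\in\Rnn$. Combining these facts,
$$
   \kappa_\star(PAQ)=\min_{X\in\mathcal{D}}\|P\hinv{A}Q - X\|_\star
   =\min_{X'\in\mathcal{D}}\|P(\hinv{A}-X')Q\|_\star=\kappa_\star(A),
$$
where we substitute $X=PX'Q$. There is no real obstacle here. The only step needing care is the invariance of $\mathcal{D}$, and it rests on the fact that permutations fix $\uno$.
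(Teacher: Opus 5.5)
Your proof is correct: $\hinv{(PAQ)}=P\hinv{A}Q$, the invariance $P\mathcal{D}Q=\mathcal{D}$ (which follows from $P\uno=\uno$ and $\uno^TQ=\uno^T$), and the permutation invariance of both norms give the claim by a change of variables in the minimization. The paper omits this proof as trivial, and your argument is the routine verification it implicitly relies on.
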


The next result shows that the matrix $\Delta(x,y)$ 
obtained from Algorithm \ref{alg:generaluv} is never too far from a matrix in $\mathcal{D}$ that is closest to $\hinv{A}$,
in both the Frobenius and Chebyshev norms. Recall that Algorithm \ref{alg:generaluv}
includes Algorithm \ref{alg:LL1} and Algorithm \ref{alg:LL2} as particular cases.

\begin{theorem}   \label{thm:kappastargen}
Let $\Delta(x,y)\in\mathcal{D}$ be the matrix 
obtained from the output of Algorithm \ref{alg:generaluv},
that is, $\Delta(x,y) = \hinv{A} - M_v\hinv{A}M_w^T$, where $A$ has no zero entries.
Then,
\begin{equation}    \label{eq:bestbound}
   \|\hinv{A} - \Delta(x,y)\|_\star \leq \alpha \kappa_\star(A) 
\end{equation}
with $\alpha = \|M_v\|_\infty\| M_w\|_\infty$
if $\star = \max$ and $\alpha = \|M_v\|_2\| M_w\|_2$ if $\star = \fro$.
Furthermore, if $x$ and $y$ are Cauchy points then, for $C = \cau(x,y)$ we also have
\begin{equation}   \label{eq:thm3.5}
   \frac{\|A - C\|_\star}{\|A\|_\star} 
   \leq \nu \kappa_\che(A)\|C\|_\che , \qquad
   \frac{\|A - C\|_\star}{\|C\|_\star} 
   \leq \nu \kappa_\che(A)\|A\|_\che .
\end{equation}
with $\nu = \|M_v\|_\infty\| M_w\|_\infty$.
\end{theorem}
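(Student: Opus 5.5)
The key observation is that the residual of the projector vanishes on $\D$. By Lemma \ref{lem:example}, $\hinv{A} - \Delta(x,y) = M_v\hinv{A}M_w^T$. Since $M_v\uno = M_w\uno = 0$, we have $M_v D M_w^T = 0$ for every $D = x'\uno^T - \uno y'^T\in\D$: indeed $M_v x'\uno^T M_w^T = M_v x' (M_w\uno)^T = 0$ and $M_v\uno y'^T M_w^T = 0$. Hence, for any $D\in\D$,
$$
   \hinv{A} - \Delta(x,y) = M_v(\hinv{A} - D)M_w^T .
$$
I will choose $D$ to be a minimizer attaining $\kappa_\star(A)$, so that $\|\hinv{A}-D\|_\star = \kappa_\star(A)$.

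It then remains to bound $\|M_v R M_w^T\|_\star$, with $R = \hinv{A}-D$.
\begin{itemize}
\item For the Frobenius norm, submultiplicativity with the spectral norm gives $\|M_v R M_w^T\|_\fro \le \|M_v\|_2\|R\|_\fro\|M_w^T\|_2$, and $\|M_w^T\|_2 = \|M_w\|_2$.
\item For the Chebyshev norm, I use vectorization \eqref{eq:ABC}: $\vec(M_vRM_w^T) = (M_w\otimes M_v)\vec(R)$. Hence $\|M_vRM_w^T\|_\che \le \|M_w\otimes M_v\|_\infty \|R\|_\che = \|M_w\|_\infty\|M_v\|_\infty\|R\|_\che$, using that the $\infty$-norm (maximum absolute row sum) of a Kronecker product is the product of the $\infty$-norms.
\end{itemize}
This proves \eqref{eq:bestbound}.

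For \eqref{eq:thm3.5}, the plan is to work entrywise and then apply Lemma \ref{lem:easy}. Since $\Delta(x,y) = \hinv{C}$, we have
$$
   A_{ij}-C_{ij} = A_{ij}C_{ij}\bigl(\hinv{C}_{ij} - \hinv{A}_{ij}\bigr).
$$
This gives
$$
   \frac{|A_{ij}-C_{ij}|}{|A_{ij}|} \le |C_{ij}|\,\|\hinv{A}-\Delta(x,y)\|_\che \le \|C\|_\che\, \nu\,\kappa_\che(A),
$$
using the Chebyshev case of \eqref{eq:bestbound}. Lemma \ref{lem:easy} then yields the first inequality for both norms. Symmetrically, dividing by $|C_{ij}|$ gives the bound with $\|A\|_\che$, and Lemma \ref{lem:easy} applied with the roles of $A$ and $C$ swapped gives the second inequality.

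The main point requiring care is the Chebyshev-norm step: making sure the Kronecker $\infty$-norm identity is applied with the right factors (it is $M_w\otimes M_v$, but the product of norms is symmetric anyway). Beyond that, the key idea is simply the invariance $M_vDM_w^T=0$ on $\D$, which lets the minimizer be inserted for free.
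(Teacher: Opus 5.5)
Your proposal is correct and follows the paper's proof essentially step by step. You insert a minimizer $B\in\mathcal{D}$ for free via $M_vBM_w^T=0$, bound the Chebyshev case through $(M_w\otimes M_v)\,\vec(\cdot)$, and derive \eqref{eq:thm3.5} entrywise before invoking Lemma \ref{lem:easy}. The only cosmetic difference is the Frobenius case: you use submultiplicativity $\|M_vRM_w^T\|_\fro\le\|M_v\|_2\|R\|_\fro\|M_w\|_2$ directly, whereas the paper goes through the equivalent identity $\|M_w\otimes M_v\|_2=\|M_w\|_2\|M_v\|_2$.
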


\begin{proof}
Let $Z = \hinv{A}$ and let $B\in\mathcal{D}$ be a matrix
such that $\kappa_\star(A) = \|\hinv{A} - B\|_\star$.
Consider first the Chebyshev norm case, $\star = \max$. 
By Lemma \ref{lem:example} we know that $M_v B M_w^T$
is the zero matrix. Hence,
\begin{align*}
   \|Z - \Delta(x,y)\|_\che = \|M_v Z M_w^T\|_\che 
   & = \|M_v (Z - B) M_w^T\|_\che  \\  
   & = \|(M_w\otimes M_v) \vec(Z - B) \|_\infty \\
   & \leq \|M_w\otimes M_v\|_\infty \|Z - B\|_\che .
\end{align*}
The identity $\|M_w\otimes M_v\|_\infty = \|M_w\|_\infty\| M_v\|_\infty$
completes the proof.
The Frobenius norm case goes exactly along the same lines but 
making use of the bound
$$
   \|(M_w\otimes M_v)\vec(Z - B)\|_2 \leq 
   \|M_w\otimes M_v\|_2 \|Z - B\|_\fro 
$$
and the identity $\|M_w\otimes M_v\|_2 = \|M_w\|_2\| M_v\|_2$.

Finally, assuming that $x$ and $y$ are Cauchy points, let $B = \Delta(x,y)$ so that $C = \hinv{B}$.
First we note that, for $i,j = 1,\ldots,n$ we have
\begin{align*}
   \bigg|\frac{1}{A_{ij}} - B_{ij}\bigg| =
   \bigg|\frac{1}{A_{ij}} - \frac{1}{C_{ij}}\bigg| & =
   \frac{|A_{ij} - C_{ij}|}{|A_{ij}C_{ij}|} \\
   & \geq \frac{|A_{ij} - C_{ij}|}{|A_{ij}|\|C\|_\che} .
\end{align*}
Let $\nu = \|M_v\|_\infty\| M_w\|_\infty$. Using the first part, we get
$$
   \frac{|A_{ij} - C_{ij}|}{|A_{ij}|} \leq
   \bigg|\frac{1}{A_{ij}} - B_{ij}\bigg| \|C\|_\che
   \leq \|\hinv{A} - B\|_\che\|C\|_\che
   \leq \nu \kappa_\che(A)\|C\|_\che .
$$
Lemma \ref{lem:easy} now applies to prove the leftmost inequality
in \eqref{eq:thm3.5}.
Analogously,
$$
   \bigg|\frac{1}{A_{ij}} - B_{ij}\bigg| =
   \frac{|A_{ij} - C_{ij}|}{|A_{ij}C_{ij}|}
   \geq \frac{|A_{ij} - C_{ij}|}{|C_{ij}|\|A\|_\che}  .
$$
Hence 
$$
   \frac{|A_{ij} - C_{ij}|}{|C_{ij}|} \leq 
   \bigg|\frac{1}{A_{ij}} - B_{ij}\bigg| \|A\|_\che 
   \leq \nu \kappa_\che(A) \|A\|_\che .
$$
It remains to apply Lemma \ref{lem:easy} to complete the proof.
\end{proof}

The next lemma provides a technical result required later.

\begin{lemma}   \label{lem:M_u}
For any $u\in\R^n$ we have 
\begin{enumerate}
\item $\|M_u\|_\infty \leq 1 + \|u\|_1$,
with equality if $u_i > 0$ for some $i=1,\ldots,n$;
\item $\|M_u\|_2 \leq 1 + \sqrt{n}\|u - \uno/n\|_2$.
\end{enumerate}
\end{lemma}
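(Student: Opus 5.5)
For part 1 we work directly with the entries of $M_u = I - \uno u^T$. The $i$-th row of $M_u$ is $e_i^T - u^T$, so its entries are $1-u_i$ in position $i$ and $-u_k$ in positions $k\neq i$. Its absolute row sum is therefore
$$
   r_i = |1-u_i| + \sum_{k\neq i}|u_k| \leq 1 + |u_i| + \sum_{k\neq i}|u_k| = 1+\|u\|_1 ,
$$
and taking the maximum over $i$ gives $\|M_u\|_\infty \leq 1+\|u\|_1$.

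For the equality case, choose an index $i$ with $u_i>0$. We need $|1-u_i| = 1 + u_i = 1+|u_i|$. This holds exactly when $1-u_i\leq 0$, i.e.\ $u_i \geq 1$, and fails for $0<u_i<1$. So the equality claim needs care. If $u_i>0$ alone does not force equality, I would look for a row index where $u_i\le 0$ instead: then $|1-u_i| = 1+|u_i|$, so $r_i = 1+\|u\|_1$ and equality holds. This suggests the intended hypothesis is that $u_i\le 0$ for some $i$; the condition $u_i\geq 1$ also suffices. I would verify which reading is correct on a small example. Taking $u=e_1$ gives $\|M_{e_1}\|_\infty = 2 = 1+\|u\|_1$, because rows $i\geq2$ have $u_i=0\le 0$. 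I would state the proof using an index where $|1-u_i| = 1+|u_i|$, and this is the step I expect to be the main obstacle, since it requires correcting or interpreting the stated condition.

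For part 2 we decompose $u = \uno/n + d$ with $d = u-\uno/n$. This gives
$$
   M_u = (I - \uno\uno^T/n) - \uno d^T .
$$
The first term is the orthogonal projector onto $\uno^\perp$, so its $2$-norm is at most $1$. The second term is rank one, with $\|\uno d^T\|_2 = \|\uno\|_2\|d\|_2 = \sqrt n\,\|u-\uno/n\|_2$. The triangle inequality then yields $\|M_u\|_2\le 1+\sqrt n\|u-\uno/n\|_2$, which is the claimed bound.
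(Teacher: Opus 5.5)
Your bounds in parts 1 and 2 are correct and follow the paper's argument. The paper gets part 1 from $\|I\|_\infty+\|\uno u^T\|_\infty$, which is your row-sum estimate in another form. Part 2 uses the same splitting $M_u=(I-\uno\uno^T/n)-\uno(u-\uno/n)^T$.

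The problem is the equality clause. Your suspicion is justified, and the paper's proof makes exactly the slip you flagged: it asserts that when $u_i>0$ the $i$-th absolute row sum of $M_u$ equals $1+\|u\|_1$. Your fallback that ``$u_i\ge 1$ also suffices'' is also wrong: if $u_i\ge 1$ then $|1-u_i|=u_i-1$, not $1+u_i$. In fact, for every row,
\[
   \sum_{k}|(M_u)_{ik}| = 1+\|u\|_1 + \big(|1-u_i|-1-|u_i|\big),
   \qquad
   |1-u_i|-1-|u_i| =
   \begin{cases} 0, & u_i\le 0,\\ -2u_i, & 0<u_i\le 1,\\ -2, & u_i>1.\end{cases}
\]
So no row with $u_i>0$ ever attains $1+\|u\|_1$, and $\|M_u\|_\infty=1+\|u\|_1$ holds if and only if $u_k\le 0$ for some $k$.

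Consequently the clause as stated is false, not merely badly proved. Take $u=\uno/n$ with $n\ge 2$: every entry is positive, yet each row has absolute sum $(1-1/n)+(n-1)/n$. Hence $\|M_u\|_\infty=2-2/n<2=1+\|u\|_1$.

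Rather than deferring ``which reading is correct'' to an experiment, you should exhibit this counterexample and prove the corrected clause from the identity above. Your test case $u=e_1$ cannot discriminate between the readings, since $e_1$ has both a positive entry and zero entries.

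With the correction, $\|M_{e_1}\|_\infty=2$ (used for the choice $v=w=e_1$) survives. However, $v=w=\uno/n$ gives $\alpha=(2-2/n)^2<4$ in the Chebyshev case. So the paper's later claim that $\alpha\ge 4$ always, with equality at $v=w=\uno/n$, does not hold either.
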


\begin{proof}
From \eqref{eq:M_u} we 
obtain immediately $\|M_u\|_\infty \leq \|I\|_\infty + \|\uno u^T\|_\infty = 1 + \|u\|_1$. 
Moreover, if $u_i > 0$ then 
$\|M_u\|_{\infty} \geq \sum_j |M_{ij}| = 1 + \sum_j |u_j| = 1+\|u\|_1$, 
and the first part follows.
To prove the second part, we first observe that 
$I-\uno\uno^T/n$ is the orthogonal projector onto $\mathrm{Span}(\uno)$, 
so $\|I - \uno\uno^T/n\|_2 = 1$. Thus,
\begin{align*}
   \|M_u\|_2 = \|I-\uno u^T\|_2 
   & \leq \|I - \uno\uno^T/n\|_2 + \|\uno(u - \uno/n)^T \|_2 \\
   & = 1 + \|\uno\|_2  \|u - \uno/n\|_2 
   = 1 + \sqrt{n}\|u - \uno/n\|_2 ,
\end{align*}
and the proof is complete.
\end{proof}

The next corollary provides easily computable formulas
for the constant $\alpha$ in Theorem \ref{thm:kappastargen}.
We refrain from including the proof since it is an immediate
consequence of  
Lemma \ref{lem:M_u}

\begin{corollary}   \label{cor:kappastargen}
For any vectors $v,w\in\R^n$,
the constant $\alpha$ in \eqref{eq:bestbound}
can be chosen as
\begin{enumerate}
\item $(1 + \| v \|_1)(1  + \| w \|_1)$ if $\star = \max$,
with equality if $v_i > 0$ and $w_j > 0$ for some $i,j=1,\ldots,n$;
\item $(1 + \sqrt{n}\| v - \uno/n\|_2)
      (1  + \sqrt{n}\| w - \uno/n \|_2)$ if $\star = \fro$.
\end{enumerate}
\end{corollary}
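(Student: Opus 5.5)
The corollary follows from Theorem \ref{thm:kappastargen} once the norm products there are bounded via Lemma \ref{lem:M_u}. Theorem \ref{thm:kappastargen} gives \eqref{eq:bestbound} with $\alpha = \|M_v\|_\infty\|M_w\|_\infty$ when $\star=\max$ and $\alpha = \|M_v\|_2\|M_w\|_2$ when $\star=\fro$. Replacing $\alpha$ by any larger constant keeps \eqref{eq:bestbound} true. So it suffices to bound each factor separately and multiply the bounds, which is valid because all quantities are nonnegative.

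For the Chebyshev case, I apply part 1 of Lemma \ref{lem:M_u} with $u=v$ and with $u=w$. This gives $\|M_v\|_\infty \le 1+\|v\|_1$ and $\|M_w\|_\infty\le 1+\|w\|_1$, and the product yields the first formula. For the equality clause, suppose $v_i>0$ for some $i$ and $w_j>0$ for some $j$. The equality part of Lemma \ref{lem:M_u} then gives $\|M_v\|_\infty = 1+\|v\|_1$ and $\|M_w\|_\infty = 1+\|w\|_1$, so the product equals the $\alpha$ of Theorem \ref{thm:kappastargen} exactly. One remark: since $v^T\uno = w^T\uno=1$, some entry of each vector is necessarily positive, so the equality in fact always holds under the standing assumptions of Algorithm \ref{alg:generaluv}. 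The stated condition is only needed for arbitrary $v,w$.

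For the Frobenius case, I apply part 2 of Lemma \ref{lem:M_u} to $M_v$ and $M_w$ and multiply the two bounds, which gives the second formula.

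There is no real obstacle here. The only point needing care is checking that the equality claim in part 1 is properly inherited, namely that the equality in Lemma \ref{lem:M_u} holds factorwise and hence for the product.
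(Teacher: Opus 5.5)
Your treatment of the two upper bounds matches the paper's argument: the paper gives no separate proof and calls the corollary an immediate consequence of Lemma~\ref{lem:M_u}. That part is fine. The equality clause, however, fails at exactly the step you flagged as needing care. The equality part of Lemma~\ref{lem:M_u} has its sign condition reversed, so there is nothing correct to inherit.

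Row $i$ of $M_u = I-\uno u^T$ is $e_i^T - u^T$. Its absolute row sum is $|1-u_i| + \sum_{j\neq i}|u_j|$, and this equals $1+\|u\|_1$ if and only if $|1-u_i| = 1+|u_i|$, i.e.\ $u_i\le 0$. Hence $\|M_u\|_\infty = 1+\|u\|_1$ holds if and only if $u$ has a \emph{nonpositive} entry; a positive entry is irrelevant. Concretely, for $v=w=\uno/n$ every row sum is $(1-1/n)+(n-1)/n = 2-2/n$. So $\|M_v\|_\infty\|M_w\|_\infty = (2-2/n)^2 < 4 = (1+\|v\|_1)(1+\|w\|_1)$, even though all entries are positive; for $n=2$ the product is $1$.

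For the same reason, your extra remark that equality always holds once $v^T\uno = w^T\uno = 1$ is false. Any strictly positive $v$ with unit sum gives $\|M_v\|_\infty = 2-2\min_i v_i < 2$. This covers Algorithm~\ref{alg:LL2} and the decreasing weights used in the experiments. (The same error appears in the remark following Corollary~\ref{cor:LL1}, which claims $\alpha=4$ for $v=w=\uno/n$.)

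What the row-sum computation actually proves is that $\|M_v\|_\infty\|M_w\|_\infty = (1+\|v\|_1)(1+\|w\|_1)$ if and only if $v_i\le 0$ and $w_j\le 0$ for some $i,j$. Each factor is bounded by its own term, and these terms are at least $1$, so the product is sharp only when both factors are. The corrected condition still covers $v=w=e_1$ with $n\ge 2$, so the value $\|M_{e_1}\|_\infty = 2$ used in Corollary~\ref{cor:LL1} survives. The equality clause as stated, however, cannot be proved. You should establish the equality case by this direct row-sum computation rather than by quoting the lemma.
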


We point out that the inequalities in \eqref{eq:bestbound}
and in Corollary \ref{cor:kappastargen} can be attained as equalities. For instance, we know that 
when $x$ and $y$ are computed by Algorithm \ref{alg:LL2}
we have $\|\hinv{A}-\Delta(x,y)\|_\fro = \kappa_\fro(A)$ if $A$ has no zero entries, owing to 
Theorem \ref{thm:1+2}.
On the other hand, point 2 of Corollary~\ref{cor:kappastargen}
gives $\alpha = 1$ as $v = w = \uno/n$ in Algorithm \ref{alg:LL2}, showing that the estimate in Theorem~\ref{thm:kappastargen} is optimal.
The result below provides analogous error bounds 
for the approximation given by Algorithm \ref{alg:LL1}.

\begin{corollary}   \label{cor:LL1}
Let $\Delta(x,y)\in\mathcal{D}$ be the matrix 
obtained from the output of Algorithm \ref{alg:LL1},
that is, $\Delta(x,y) = \hinv{A} - M_{e_1}\hinv{A}M_{e_1}^T$.
Then,
$\|\hinv{A} - \Delta(x,y)\|_\star \leq \alpha \kappa_\star(A)$
where $\alpha = 4$
if $\star = \max$ and $\alpha = n$ if $\star = \fro$.
Furthermore, if $x$ and $y$ are Cauchy points then, for $C = \cau(x,y)$ we also have
$$
   \frac{\|A - C\|_\star}{\|A\|_\star} 
   \leq 4\kappa_\che(A)\|C\|_\che , \qquad
   \frac{\|A - C\|_\star}{\|C\|_\star} 
   \leq 4\kappa_\che(A)\|A\|_\che .
$$
\end{corollary}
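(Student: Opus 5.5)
This is a specialization of Theorem~\ref{thm:kappastargen} to $v=w=e_1$. By Theorem~\ref{thm:1+2}, the output of Algorithm~\ref{alg:LL1} satisfies $\Delta(x,y)=\Phi_1(\hinv{A})=\hinv{A}-M_{e_1}\hinv{A}M_{e_1}^T$, which is the output of Algorithm~\ref{alg:generaluv} with $v=w=e_1$. Since $\uno^Te_1=1$, the hypotheses of Lemma~\ref{lem:example} and Theorem~\ref{thm:kappastargen} hold. So all that is needed is to compute the constants $\|M_{e_1}\|_\infty^2$ and $\|M_{e_1}\|_2^2$.

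\textbf{Chebyshev norm.} Corollary~\ref{cor:kappastargen}, point 1, applies with equality, because $e_1$ has a positive entry. It gives $\alpha=(1+\|e_1\|_1)^2=4$. The same value is $\nu$ in \eqref{eq:thm3.5}, and this yields the two relative error bounds with constant $4$.

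\textbf{Frobenius norm.} Here I would not use point 2 of Corollary~\ref{cor:kappastargen}. Since $\|e_1-\uno/n\|_2=\sqrt{(n-1)/n}$, that point gives $(1+\sqrt{n-1})^2$, which exceeds $n$ for $n\ge 3$. Instead I compute $\|M_{e_1}\|_2$ exactly.

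\begin{itemize}
\item Note that $M=I-\uno e_1^T$ has first column $(0,-1,\ldots,-1)^T$; its remaining columns are $e_2,\ldots,e_n$.
\item Hence $MM^T=I-e_1e_1^T+\uno'\uno'^T$, where $\uno'=(0,1,\ldots,1)^T$.
\item The eigenvalues of $MM^T$ are therefore $0$, then $1$ with multiplicity $n-2$, and $1+(n-1)=n$ for the eigenvector $\uno'$.
\item So $\|M_{e_1}\|_2=\sqrt n$, and $\alpha=\|M_{e_1}\|_2^2=n$.
\end{itemize}

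Substituting into \eqref{eq:bestbound} gives the claim. The only step requiring care is this exact evaluation of the spectral norm, since the general bound of Lemma~\ref{lem:M_u} is not sharp enough to give $n$.
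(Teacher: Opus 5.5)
Your proof is correct and follows the paper's: you specialize Theorem~\ref{thm:kappastargen} to $v=w=e_1$, read off $\|M_{e_1}\|_\infty=2$ from Corollary~\ref{cor:kappastargen} (equivalently Lemma~\ref{lem:M_u}), and compute $\|M_{e_1}\|_2=\sqrt{n}$ exactly from the fact that $M_{e_1}M_{e_1}^T$ is block diagonal with blocks $0$ and $I_{n-1}+\uno\uno^T$. Your explicit eigenvalue count ($0$, then $1$ with multiplicity $n-2$, then $n$) replaces the paper's appeal to Perron--Frobenius, but it is the same computation.
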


\begin{proof}
The identity $\|M_{e_1}\|_\infty = 2$ is an immediate consequence of 
Corollary \ref{cor:kappastargen}, point 1.
To compute $\|M_{e_1}\|_2$, note that
$$
   M_{e_1}M_{e_1}^T = (I-\uno e_1^T)(I - e_1 \uno^T) = 
   I - \uno e_1^T - e_1 \uno^T + \uno\uno^T = 
   \begin{pmatrix} 0 & \\ & S \end{pmatrix} ,
$$
where $S = I + \uno\uno^T\in\R^{(n-1)\times (n-1)}$. 
By elementary techniques and Perron-Frobenius theory,
we deduce that the spectral radius of $S$
is $n$, due to the identity $S\uno = n\uno$.
Consequently,
$\|M_{e_1}\|_2 = \sqrt{\rho(M_{e_1}M_{e_1}^T)} = \sqrt{\rho(S)} = \sqrt{n}$.
The claim is now a consequence of Theorem \ref{thm:kappastargen}.

\end{proof}


\begin{remark}
    In view of Theorem \ref{thm:kappastargen}, one may want to choose vectors $v$ and $w$ in Algorithm \ref{alg:generaluv} to minimize $\alpha$. 
    Because of the definition of $\kappa_\star$, we trivially have $\alpha \geq 1$, and this lower bound is attained in the Frobenius norm case when $u = v = \uno/n$ as in Algorithm \ref{alg:LL2}, see Corollary  \ref{cor:kappastargen}. Instead, if $\star = \max$ then the lower bound for $\alpha$ increases to $4$. Indeed, the constraints $v^T\uno=1$, $w^T\uno=1$ imply that $v$ and $w$ have at least one positive entry, and moreover, $\|v\|_1\geq 1$ and $\|w\|_1\geq 1$. Hence, $\alpha = \|M_u\|_\infty\|M_v\|_\infty \geq 4$ by Lemma \ref{lem:M_u}. Choosing $v$ and $w$ with nonnegative entries we have $\|v\|_1=\|w\|_1=1$ and therefore $\alpha = \|M_u\|_\infty\|M_v\|_\infty = 4$.
    This value is attained when, for example, $v = w = e_1$ as in Algorithm \ref{alg:LL1}, and $v = w = \uno/n$ as in Algorithm \ref{alg:LL2},
    see Theorem \ref{thm:1+2}.
\end{remark}


\section{A deeper analysis of Algorithm 1 using CUR approximation theory}   \label{sec:CUR}

Due to its simplicity and the effectiveness of
the error bounds in Corollary \ref{cor:LL1},
the possibility of using
Algorithm~\ref{alg:LL1} to recover a perturbed Cauchy matrix deserves a deeper analysis.
In what follows, we are concerned with \emph{a priori} error bounds, 
that is, bounds that are inherent to Algorithm \ref{alg:LL1}.
In fact, Theorem \ref{thm:LL1} allows us to estimate the error $\|A-\hinv{D}\|_\star$ on the basis of the entrywise residual $|1-A_{ij}D_{ij}|$, thus giving an `a posteriori' bound. Instead, in this section we present estimates of the approximation error based on more intrinsic properties of $A$ or other closely related matrices.
Furthermore, while the hypothesis \eqref{eq:hyp} constrains the entries
of $D$ (and $C$, as a consequence) to agree in sign with the corresponding entries of $A$, the results in this section are much less restrictive. 
Actually, Corollary \ref{cor:LL1} is one result in this vein, which follows from the projector-based analysis developed in Section
\ref{sec:generalfamily}. Here, we present a further study of Algorithm \ref{alg:LL1} based on CUR approximation theory.

The CUR approximation of a matrix is a technique employed in numerical linear algebra to devise low-rank approximations of matrices \cite{MR2443975,MR4068947}. The concept revolves around decomposing a given matrix $A$ into three factors, 
often denoted by $C$, $U$ and $R$.
Here, $C$ is a matrix formed by a selection of columns from $A$, $R$ denotes a selection of rows from $A$, and  $U$ is a small matrix linking the previous two. This may result in the approximated factorization $A\approx CUR$, which may become an identity when the rank of the factors equals the rank of $A$. In what follows, we make use of the following result, which 
is a minor reworking of Theorem 2.2 in \cite{GoTy01}, one of the main results in CUR approximation theory.

\begin{theorem}   \label{thm:CUR}
    Suppose that $A$ is a block matrix of the form
    $$
       A = \begin{pmatrix} A_{11} & A_{12} \\ A_{21} & A_{22} \end{pmatrix} ,
    $$
    where $A_{11}$ is $k\times k$ and nonsingular, 
    and, if $B$ is any $k\times k$ submatrix of $A$ then $|\det(A_{11})|\geq \nu |\det(B)|$
    for some $0 < \nu \leq 1$. 
    Let $C = [A_{11} ; A_{21}]$, $U = A_{11}^{-1}$,
    and $R = [A_{11} \ A_{12}]$.
    Then
    $$
       \|A - CUR\|_\che
       \leq \nu^{-1}(k+1) \sigma_{k+1}(A) ,
    $$
    where $\sigma_{k+1}(A)$ is the $(k+1)$-th largest singular value of $A$.
\end{theorem}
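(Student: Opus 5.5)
The plan is to reduce the claim to an entrywise estimate on the Schur complement and then bound each entry by a determinant-ratio argument, essentially following Goreinov and Tyrtyshnikov. First, a direct block computation gives
$$
   A - CUR = \begin{pmatrix} 0 & 0 \\ 0 & A_{22} - A_{21}A_{11}^{-1}A_{12} \end{pmatrix} .
$$
So it suffices to bound $|e_{ij}|$ for every entry $e_{ij}$ of the Schur complement $S = A_{22} - A_{21}A_{11}^{-1}A_{12}$. Fix such an entry. Let $\hat A$ be the $(k+1)\times(k+1)$ submatrix of $A$ obtained by bordering $A_{11}$ with the corresponding row $i$ and column $j$. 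The Schur determinant formula gives $\det \hat A = \det(A_{11})\, e_{ij}$. If $\hat A$ is singular, then $e_{ij}=0$ and there is nothing to prove, so assume $\hat A$ is nonsingular.

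Next I would express $e_{ij}$ through the inverse of $\hat A$. By Cramer's rule, the bottom-right entry of $\hat A^{-1}$ is $\det(A_{11})/\det(\hat A) = 1/e_{ij}$. More generally, every entry of $\hat A^{-1}$ equals $\pm$ a $k\times k$ minor of $\hat A$ divided by $\det\hat A$. Each such minor is the determinant of a $k\times k$ submatrix of $A$, so by the quasi-maximal volume hypothesis its modulus is at most $|\det A_{11}|/\nu$. Hence every entry of $\hat A^{-1}$ is bounded in modulus by $|\det A_{11}|/(\nu|\det\hat A|) = 1/(\nu |e_{ij}|)$. Bounding the spectral norm by $(k+1)$ times the maximal entry (for instance via $\|X\|_2\le\|X\|_\fro$) then yields $\|\hat A^{-1}\|_2 \le (k+1)/(\nu|e_{ij}|)$.

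Finally, I would relate $\|\hat A^{-1}\|_2$ to the singular values of $A$. We have $\|\hat A^{-1}\|_2 = 1/\sigma_{k+1}(\hat A)$. By the interlacing property of singular values of submatrices, $\sigma_{k+1}(\hat A)\le\sigma_{k+1}(A)$, since deleting rows or columns cannot increase the singular values. Combining the two inequalities gives $1/\sigma_{k+1}(A)\le (k+1)/(\nu|e_{ij}|)$, that is, $|e_{ij}|\le \nu^{-1}(k+1)\sigma_{k+1}(A)$. Taking the maximum over all $i,j$ yields the claim.

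The main obstacle is the middle step: correctly identifying the cofactors of $\hat A$ as $k\times k$ minors of $A$, so that the hypothesis on $\det A_{11}$ applies. I also need to make sure the passage from the entrywise bound to the spectral norm yields the factor $k+1$ and not something larger.
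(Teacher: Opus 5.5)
Your proof is correct, and it is essentially the Goreinov--Tyrtyshnikov argument that the paper relies on: the paper gives no proof of its own and simply cites Theorem~2.2 of Goreinov and Tyrtyshnikov. In your version, the factor $\nu^{-1}$ enters exactly where it should: every cofactor of the bordered $(k+1)\times(k+1)$ matrix $\hat A$ is a $k\times k$ minor of $A$, hence bounded by $|\det A_{11}|/\nu$.
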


Let $A$ be a possibly perturbed Cauchy matrix.
With no loss in generality, we can suppose that $\max_{i,j}|A_{ij}| = 1$.
Indeed, the reconstruction of a Cauchy matrix is 
homogeneous with respect to
multiplicative constants. For clarity, if $x$ and $y$ are the vectors
computed by Algorithm \ref{alg:LL1} with input matrix $A$
and we let $C = \cau(x,y)$ then, for any nonzero scalar $\alpha$, 
the vectors
computed by Algorithm \ref{alg:LL1} with input matrix $\alpha A$
are $x/\alpha$ and $y/\alpha$, so that the reconstructed Cauchy matrix 
is $\cau(x/\alpha,y/\alpha) = \alpha C$.
Furthermore, by Lemma \ref{lem:permut},
we can safely suppose that $A_{11} = 1$ is an entry of maximum modulus.

For notation simplicity,
let $Z = \hinv{A}$. Partition the matrix $Z$ as follows:
$$
   Z = \begin{pmatrix} 1 & w^T \\ v & Y \end{pmatrix} 
$$
where $v,w\in\R^{n-1}$ and $Y\in\R^{(n-1)\times (n-1)}$.
Let $B = Z - M_{e_1}ZM_{e_1}^T$ be the matrix constructed from the output of
Algorithm 1. By simple computations,  
$$
   Z - B = \begin{pmatrix} 0 & 0 \\ 0 & S \end{pmatrix} 
$$
where $S = Y - \uno w^T - v\uno^T + \uno\uno^T$.
Introduce the bordered matrix
\begin{equation}   \label{eq:Zhat}
   \widehat Z = \begin{pmatrix}
   0 & \uno^T \\
   \uno & Z \end{pmatrix}
   = \begin{pmatrix}
   0 & 1 & \uno^T \\
   1 & 1 & w^T \\ 
   \uno & v & Y \end{pmatrix}
\end{equation}
and consider the partitioning
\begin{equation}   \label{eq:Zhatpart}
   \widehat Z = \begin{pmatrix}
   \widehat Z_{11} & \widehat Z_{12} \\
   \widehat Z_{21} & Y \end{pmatrix} .
\end{equation}
Here $\wh Z_{11}\in\R^{2\times 2}$,
$\wh Z_{12} = (\uno \ w)^T \in\R^{2\times(n-1)}$ and
$\wh Z_{21} = (\uno \ v) \in\R^{(n-1)\times 2}$.
In particular,
$$
   \wh Z_{11} = \begin{pmatrix} 0 & 1\\ 1 & 1 \end{pmatrix},
   \qquad
   \wh Z_{11}^{-1} = \begin{pmatrix} -1 & 1\\ 1 & 0 \end{pmatrix} .
$$
The Schur complement of $\wh Z_{11}$ in $\wh Z$ is
$$
   Y - \wh Z_{21}\wh Z_{11}^{-1}\wh Z_{12}
   = Y - v\uno^T + \uno\uno^T - \uno w^T  = S .
$$
This identity shows that the residual $\hinv{A} - \Delta(x,y)$
of Algorithm \ref{alg:LL1}
can be written as Schur complement of a
suitable bordering of the matrix $\hinv{A}$. Moreover,
consider the rank-2 CUR approximation of $\wh Z$
corresponding to the partitioning in \eqref{eq:Zhatpart}. We have
\begin{align*}
   \wh Z -  
   \begin{pmatrix} \wh Z_{11} \\ \wh Z_{21} \end{pmatrix}
   \wh Z_{11}^{-1} 
   \begin{pmatrix} \wh Z_{11} & \wh Z_{12} \end{pmatrix}
   & = \wh Z - 
   \begin{pmatrix}
   0 & 1 & \uno^T \\
   1 & 1 & w^T \\
   \uno & v & v\uno^T + \uno w^T - \uno\uno^T \end{pmatrix} \\
   & = \begin{pmatrix}
   0 & 0 & 0 \\
   0 & 0 & 0 \\
   0 & 0 & S  \end{pmatrix} = 
   \begin{pmatrix}
   O &  \\
     & Z - B \end{pmatrix}.
\end{align*}
Thus $\|\hinv{A} - B\|_\star$
is equal to the residual of the CUR approximation of $Z$.
The latter can be estimated using
Theorem \ref{thm:CUR}, as follows.

\begin{corollary}   \label{cor:CUR}
Under the (non-restrictive) assumptions on $A$ and $Z$ stated above,
if $x,y$ is the output of Algorithm 1 applied to $A$ and $\wh Z$ is the matrix in \eqref{eq:Zhat} then
$$
   \|\hinv{A} - \Delta(x,y)\|_\che \leq 
   6 \sigma_{3}(\wh Z) .
$$ 
\end{corollary}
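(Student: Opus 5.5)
We apply Theorem~\ref{thm:CUR} to the bordered matrix $\wh Z$ with $k=2$, using the partitioning \eqref{eq:Zhatpart}. Its leading block $\wh Z_{11}$ has $|\det(\wh Z_{11})| = 1$. The computation preceding the statement shows that the CUR residual of $\wh Z$ is the block matrix $\mathrm{diag}(O, Z-B)$, where $B=\Delta(x,y)$. Hence
$$
\|\hinv{A}-\Delta(x,y)\|_\che = \|\wh Z - CUR\|_\che ,
$$
and Theorem~\ref{thm:CUR} gives the bound $\nu^{-1}\cdot 3\,\sigma_3(\wh Z)$. It remains to find an admissible $\nu$. The target constant $6 = 3\cdot 2$ tells us what to aim for: we must show that every $2\times 2$ submatrix of $\wh Z$ has determinant of modulus at most $2$, so that $\nu = 1/2$ satisfies $|\det \wh Z_{11}| \ge \nu |\det(\cdot)|$.

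The key step is bounding the entries of $\wh Z$. The normalization gives $\max_{ij}|A_{ij}| = |A_{11}| = 1$, so every entry of $Z=\hinv{A}$ satisfies $|Z_{ij}| = 1/|A_{ij}| \ge 1$. This is a lower bound, not an upper bound, which is the wrong direction for controlling determinants directly. This is where I expect the main obstacle.

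I would try two ways around it.

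\emph{First attempt.} Reduce to an entrywise bound of $1$. If all entries of $\wh Z$ had modulus at most $1$, any $2\times 2$ determinant $ad-bc$ would be bounded by $|a||d|+|b||c|\le 2$, giving $\nu=1/2$ and the constant $6$. Since the entries of $Z$ are at least $1$ in modulus, I would look for a convention under which the relevant entries are instead at most $1$ in modulus: either the normalization is really placing the entry of \emph{minimum} modulus of $A$ (equivalently, the maximum modulus entry of $Z$) at position $(1,1)$, or we rescale so that $\max|Z_{ij}| = 1$ with $Z_{11}=1$. The homogeneity argument and Lemma~\ref{lem:permut} allow us to permute and scale $A$ freely. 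Under either choice, every entry of $\wh Z$, including the zero and the bordering ones, has modulus at most $1$. The determinant bound then holds for all $2\times 2$ submatrices, whether or not they involve the border row or column.

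\emph{Second attempt.} If the statement genuinely intends $A_{11}$ to be a maximal entry of $A$, I would invoke the Goreinov--Tyrtyshnikov estimate in its alternative form, replacing the maximal-volume ratio with a bound that uses the specific structure of $\wh Z_{11}$. I would also check whether the paper's convention is that $Z$ is normalized rather than $A$.

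Granting the entrywise bound, the conclusion is immediate: Theorem~\ref{thm:CUR} with $k=2$ and $\nu=1/2$ yields
$$
\|\hinv{A}-\Delta(x,y)\|_\che \le 2\cdot 3\,\sigma_3(\wh Z) = 6\,\sigma_3(\wh Z).
$$
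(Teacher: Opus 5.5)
Your skeleton is exactly the paper's: Theorem~\ref{thm:CUR} applied to $\wh Z$ with $k=2$ and $\nu=1/2$, using that the CUR residual of $\wh Z$ is $\mathrm{diag}(O,\hinv{A}-\Delta(x,y))$. The obstacle you flagged is genuine, and the paper's proof does not overcome it. The paper justifies $\nu=1/2$ by observing that every $2\times 2$ minor of $A$ has modulus at most $2$. Theorem~\ref{thm:CUR}, however, requires this for the minors of $\wh Z$, whose non-border entries $1/A_{ij}$ have modulus at least $1$.

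In fact, under the stated normalization $A_{11}=1=\max_{ij}|A_{ij}|$ the inequality is false, so your second attempt (or any other argument) cannot succeed. Take $n=2$, $M\ge 4$, $0<s<1$, and
\[
Z=\hinv{A}=\begin{pmatrix} 1 & M\\ M & 2M-1+s\end{pmatrix},
\qquad
\wh Z=\begin{pmatrix} 0 & 1 & 1\\ 1 & 1 & M\\ 1 & M & 2M-1+s\end{pmatrix}.
\]
Then $A$ satisfies the normalization, and Algorithm~\ref{alg:LL1} gives $\hinv{A}-\Delta(x,y)=\mathrm{diag}(0,s)$, so the left-hand side equals $s$. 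On the other hand, $\det\wh Z=-s$ (Schur complement of $\wh Z_{11}$), while the trailing $2\times 2$ minor of $\wh Z$ equals $s-(M-1)^2$. By interlacing, $\sigma_1(\wh Z)\sigma_2(\wh Z)\ge (M-1)^2-s$, hence
\[
6\,\sigma_3(\wh Z)=\frac{6s}{\sigma_1(\wh Z)\,\sigma_2(\wh Z)}\le\frac{6s}{(M-1)^2-s}<s .
\]
Letting $M\to\infty$ shows that no constant can replace $6$. (For $M\ge 6$ the same example, where $\kappa_\che(A)=s/4$, also violates the lower bound $\kappa_\che(A)/6\le\sigma_3(\wh Z)$ of the next theorem, which is derived from this corollary.)

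Your first attempt is the correct repair, but it is a change of hypothesis, not a reading of the stated one, and you should present it that way. Pivot Algorithm~\ref{alg:LL1} at an entry of \emph{minimum} modulus of $A$ and scale it to $1$, so that $Z_{11}=1=\max_{ij}|Z_{ij}|$. Both the position and the exact value $1$ matter: the partition needs $\wh Z_{11}=\begin{pmatrix}0&1\\1&1\end{pmatrix}$, and the border entries do not scale with $A$. Then all entries of $\wh Z$ lie in $[-1,1]$, every $2\times 2$ minor of $\wh Z$ has modulus at most $2=2|\det\wh Z_{11}|$, and Theorem~\ref{thm:CUR} yields $\|\hinv{A}-\Delta(x,y)\|_\che\le 6\,\sigma_3(\wh Z)$. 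This normalization is exactly as non-restrictive as the paper's, since it amounts to a choice of pivot row and column for Algorithm~\ref{alg:LL1} plus a scaling. So the corollary should be stated with this normalization, and the second attempt dropped.
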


\begin{proof}
By \eqref{eq:Zhat}, we have $|\det(\wh Z_{11})| = 1$. Furthermore, since $|A_{ij}| \leq 1$ for all $i,j$, the determinant of every $2\times 2$
submatrix of $A$ has a modulus not larger than $2$.
Thus, the claim follows straightforwardly from  Theorem \ref{thm:CUR} applied to $\wh Z$ with $k = 2$ and $\nu = 1/2$.
\end{proof}

The inequality in Corollary \ref{cor:CUR}
shows a nontrivial
relation between the reconstruction error in max-norm
and the third singular value of a bordering 
of $\hinv{A}$, thus providing a guarantee 
on the quality of the output of Algorithm \ref{alg:LL1} that can be estimated a priori from $A$. 
The following result shows that $\sigma_3(\wh Z)$ 
quantifies the (non-)Cauchyness of $A$,
being essentially equivalent to $\kappa_\star(A)$.

\begin{theorem}
Let $A$ be a matrix with no null entries, and let $\wh Z$ be the matrix in \eqref{eq:Zhat}.
Then
$$
   \kappa_\che(A)/6 \leq \sigma_3(\wh Z) \leq \kappa_\fro(A) .
$$
In particular, $A$ is Cauchy if and only if $\sigma_3(\wh Z) = 0$.
\end{theorem}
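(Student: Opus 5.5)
The two inequalities are proved separately, each by combining results already in the paper. The ``in particular'' part then follows at once.

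\emph{Lower bound.} By definition, $\kappa_\che(A)\le\|\hinv{A}-X\|_\che$ for every $X\in\D$. Take $X=\Delta(x,y)$, where $x,y$ is the output of Algorithm~\ref{alg:LL1}. Corollary~\ref{cor:CUR} then gives $\kappa_\che(A)\le\|\hinv{A}-\Delta(x,y)\|_\che\le 6\sigma_3(\wh Z)$.

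One technical point needs care. Corollary~\ref{cor:CUR} was stated under the normalization $A_{11}=1=\|A\|_\che$. By Lemma~\ref{lem:permut}, permuting rows and columns of $A$ leaves $\kappa_\che(A)$ unchanged. The same permutation changes $\wh Z$ only by a permutation of its rows and columns other than the first, so $\sigma_3(\wh Z)$ is unchanged too.

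Scaling $A$ to $\gamma A$ multiplies $\kappa_\che(A)$ by $1/\gamma$. It does not, however, scale $\wh Z$ homogeneously, because the border entries are fixed at $0$ and $1$. To handle this, I would check that the proof of Corollary~\ref{cor:CUR} only uses the fact that $Z$ is the entrywise reciprocal of a matrix whose entries are bounded by $1$. I expect this scaling issue to be the main obstacle. If it cannot be resolved cleanly, I would state the result under the same normalization as Corollary~\ref{cor:CUR}, which the paper calls non-restrictive.

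\emph{Upper bound.} Let $D=\Delta(p,q)=p\uno^T-\uno q^T\in\D$ be a best Frobenius approximation, so that $\|\hinv{A}-D\|_\fro=\kappa_\fro(A)$. Set $W=\wh Z-\begin{pmatrix}0&0\\0&\hinv{A}-D\end{pmatrix}$, that is,
$$
W=\begin{pmatrix}0&\uno^T\\ \uno & p\uno^T-\uno q^T\end{pmatrix}.
$$
Every column of $W$ is a linear combination of the two vectors $(0;\uno)$ and $(1;p)$. Indeed, the first column is $(0;\uno)$, and column $j+1$ is $(1;\,p-q_j\uno)=(1;p)-q_j(0;\uno)$. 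Hence $\mathrm{rank}(W)\le2$.

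By the Eckart--Young (Weyl) inequality,
$$
\sigma_3(\wh Z)\le\sigma_3(W)+\|\wh Z-W\|_2=\|\hinv{A}-D\|_2\le\|\hinv{A}-D\|_\fro=\kappa_\fro(A).
$$
This step is routine.

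\emph{Characterization.} If $A$ is Cauchy, then $\hinv{A}\in\D$, so $\kappa_\fro(A)=0$ and therefore $\sigma_3(\wh Z)=0$. Conversely, if $\sigma_3(\wh Z)=0$, the lower bound gives $\kappa_\che(A)=0$, so $\hinv{A}\in\D$. Since $A$ has no zero entries, $A$ is then Cauchy.
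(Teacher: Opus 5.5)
Your proof follows the paper's exactly. The lower bound is taken from Corollary~\ref{cor:CUR}, and the upper bound comes from a rank-two bordering of a best Frobenius approximant in $\D$; your Weyl-inequality version of the Eckart--Young step is correct. The gap is in the lower bound, and the scaling issue you flagged is fatal rather than technical. Falling back on the normalization $\|A\|_\che=A_{11}=1$ of Corollary~\ref{cor:CUR} does not help, because under that normalization the inequality is false.

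The reason is that Theorem~\ref{thm:CUR} is applied to $\wh Z$. So $\nu$ must control the $2\times 2$ minors of $\wh Z$, whose entries are $0$, $1$ and $1/A_{ij}$. The property you plan to use is that $Z$ is the entrywise reciprocal of a matrix with entries bounded by $1$. This is exactly what the proof of Corollary~\ref{cor:CUR} uses (it bounds minors of $A$, not of $\wh Z$). But it only gives $|1/A_{ij}|\ge 1$, which is a lower bound on the entries of $\wh Z$, not an upper bound, so the choice $\nu=1/2$ is unjustified.

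Concretely, take
\[
A=\begin{pmatrix}1&1/10\\ 1/10&1/20\end{pmatrix},\qquad
\hinv{A}=\begin{pmatrix}1&10\\ 10&20\end{pmatrix},\qquad
\wh Z=\begin{pmatrix}0&1&1\\ 1&1&10\\ 1&10&20\end{pmatrix}.
\]
For $n=2$, $\D$ is the kernel of $\ell(X)=X_{11}-X_{12}-X_{21}+X_{22}$, since both have dimension $3$. We have $\ell(\hinv A)=1$ and $|\ell(E)|\le 4\|E\|_\che$, with equality for $E=\frac14\bigl(\begin{smallmatrix}1&-1\\-1&1\end{smallmatrix}\bigr)$. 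Hence $\kappa_\che(A)=1/4$, and the residual $\|\hinv A-\Delta(x,y)\|_\che$ of Algorithm~\ref{alg:LL1} equals $1$.

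On the other hand, $\det\wh Z=-1$, while the block $\hinv A$ of $\wh Z$ has determinant $-80$. By interlacing, $\sigma_1(\wh Z)\sigma_2(\wh Z)\ge 80$, and therefore
\[
\sigma_3(\wh Z)\le 1/80<1/24=\kappa_\che(A)/6 .
\]
So both Corollary~\ref{cor:CUR} and the claimed lower bound fail, and they cannot be proved as stated.

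Your argument does work after changing the normalization. Assume $\|\hinv A\|_\che\le 1$, i.e.\ $\min_{i,j}|A_{ij}|\ge 1$.
\begin{itemize}
\item Every entry of $\wh Z$ then lies in $[-1,1]$, so every $2\times 2$ minor of $\wh Z$ has modulus at most $2$.
\item $\det\wh Z_{11}=-1$ whatever $Z_{11}$ is.
\item The Schur-complement identity for the residual of Algorithm~\ref{alg:LL1} still holds, with $Z_{11}\uno\uno^T$ in place of $\uno\uno^T$.
\end{itemize}
Theorem~\ref{thm:CUR} with $k=2$ and $\nu=1/2$ then gives $\kappa_\che(A)\le 6\,\sigma_3(\wh Z)$, with no permutation needed. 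Without any normalization, the same argument gives
\[
\kappa_\che(A)\le 6\max\{1,\|\hinv A\|_\che^2\}\,\sigma_3(\wh Z).
\]

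The ``in particular'' statement survives. If $\sigma_3(\wh Z)=0$, then $\mathrm{rank}\,\wh Z=2+\mathrm{rank}(S)\le 2$, which forces the Schur complement $S$ of the invertible block $\wh Z_{11}$ to vanish. Hence $\hinv A=\Phi_1(\hinv A)\in\D$.
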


\begin{proof}
In the notation of Corollary \ref{cor:CUR}, 
we have $\kappa_\che(A) \leq \|\hinv{A} - \Delta(x,y)\|_\che \leq 6 \sigma_{3}(\wh Z)$, and the leftmost inequality in the claim follows.
For the other inequality, let $B\in\mathcal{D}$
be the matrix such that $\|\hinv{A} - B\|_\fro = \kappa_\fro(A)$. Introduce the bordered matrix
$$
   \wh B = \begin{pmatrix} 0 & \uno^T \\
   \uno & B \end{pmatrix} .
$$
Note that $\wh B$ has rank $2$. Indeed,
$B = \Delta(x,y)$ for some vector pair $(x,y)$, and
$$
   \wh B = \begin{pmatrix} 0 & \uno^T \\ \uno & 
   x\uno^T - \uno y^T \end{pmatrix}
   = \begin{pmatrix} 0 & 1 \\ \uno & x \end{pmatrix}
   \begin{pmatrix} 1 & 0 \\ -y & \uno \end{pmatrix}^T .
$$
Then, by Eckart-Young theorem,
$$
   \kappa_\fro(A) = \| \hinv{A} - B\|_\fro 
   = \| \wh Z - \wh B\|_\fro \geq \sigma_3(\wh Z) ,
$$
and the proof is complete.
\end{proof}


\section{A displacement-based recovery method}
\label{sec:displacementbased}

In this section, we introduce a new optimality criterion 
to approximate a given matrix $A$ by a Cauchy matrix.
This criterion leads to an $O(n^3)$ algorithm to 
compute a Cauchy approximation to $A$ that does not make use of $\hinv{A}$.
Given vectors $x,y$ the matrix operator 
$\nabla_{x,y}:A\mapsto D_x A - A D_y$ is often called a 
displacement operator \cite{MR533501}. This operator is invertible if and only if $x$ and $y$ are Cauchy points, and  in this case, the matrix $\cau(x,y)$ is the (unique) solution of 
the Sylvester matrix equation $\nabla_{x,y}(X) = \uno\uno^T$.
Thus we may consider the number
\begin{equation}   \label{eq:defbeta}
   \beta_\star(A) = \min_{x,y}\|\nabla_{x,y}(A) - \uno\uno^T\|_\star
\end{equation}
as a measure of `Cauchyness' of a matrix $A$.
Indeed, $\beta_\star(A) = 0$ if and only if $A$ is a Cauchy matrix.
Moreover, Theorem \ref{thm:LL1}
proved that 
if $\beta_{\max}(A)$
is less than $1$ then $A$ is close to a Cauchy matrix.
The relationship between $\beta_\star$ and $\kappa_\star$
is shown here below.

\begin{theorem}   \label{thm:betakappa}
If $A$ is a matrix without null entries then
$$
    \frac{\kappa_\star(A)}{\|\hinv{A}\|_\che} 
    \leq \beta_\star(A) \leq \kappa_\star(A) \|A\|_\che .
$$
\end{theorem}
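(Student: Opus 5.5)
The argument rests on one entrywise identity relating the displacement residual to the reciprocal residual. For any $x,y$, with $D_x=\mathrm{Diag}(x)$ and $D_y=\mathrm{Diag}(y)$, we have
$$
   [\nabla_{x,y}(A) - \uno\uno^T]_{ij} = (x_i-y_j)A_{ij} - 1 = A_{ij}\big(\Delta(x,y)_{ij} - \hinv{A}_{ij}\big).
$$
In matrix form this reads $\nabla_{x,y}(A)-\uno\uno^T = A\circ(\Delta(x,y)-\hinv{A})$, where $\circ$ is the entrywise product. Conversely, $\Delta(x,y)-\hinv{A} = \hinv{A}\circ(\nabla_{x,y}(A)-\uno\uno^T)$. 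As $(x,y)$ ranges over $\R^n\times\R^n$, the matrix $\Delta(x,y)$ ranges over all of $\mathcal{D}$, so both minimizations run over the same set of pairs. This is what lets us compare $\beta_\star$ and $\kappa_\star$.

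We also need an elementary fact: for any matrices $G,H$,
$$
   \|G\circ H\|_\star \leq \|G\|_\che \|H\|_\star
$$
in both the Chebyshev and Frobenius norms. It holds because each entry satisfies $|G_{ij}H_{ij}|\leq \|G\|_\che |H_{ij}|$, and both norms are monotone in the absolute values of the entries.

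For the upper bound, choose $(x,y)$ so that $X=\Delta(x,y)$ attains $\kappa_\star(A)$; the minimum exists because $\mathcal{D}$ is a finite-dimensional subspace. Then
$$
   \beta_\star(A) \leq \|A\circ(\Delta(x,y)-\hinv{A})\|_\star \leq \|A\|_\che\,\kappa_\star(A).
$$

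For the lower bound, take any $(x,y)$, for instance a minimizer in \eqref{eq:defbeta}, which exists by the same finite-dimensional least-distance argument, or just an infimizing sequence. Then
$$
   \kappa_\star(A)\leq \|\Delta(x,y)-\hinv{A}\|_\star = \|\hinv{A}\circ(\nabla_{x,y}(A)-\uno\uno^T)\|_\star \leq \|\hinv{A}\|_\che\,\|\nabla_{x,y}(A)-\uno\uno^T\|_\star.
$$
Minimizing over $(x,y)$ gives $\kappa_\star(A)\leq\|\hinv{A}\|_\che\,\beta_\star(A)$.

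There is no real obstacle here. The only point that needs care is that in \eqref{eq:defbeta} the pair $(x,y)$ is unconstrained, in particular not required to be Cauchy points, so that the displacement residual ranges over exactly the image of $\mathcal{D}$ and the two minimizations match.
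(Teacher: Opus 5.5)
Your proposal is correct and follows essentially the same route as the paper. The paper's identity $\vec(\nabla_{v,w}(A)-\uno\uno^T)=\mathrm{Diag}(\vec(A))\,\vec(\Delta(v,w)-\hinv{A})$ is your Hadamard-product identity in vectorized form, and both bounds come from the same norm estimates, applied at a minimizer of $\kappa_\star$ and (for the lower bound) of $\beta_\star$ — where your infimum over all $(x,y)$ is marginally cleaner because it does not need the $\beta_\star$ minimizer to exist.
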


\begin{proof}
For any two vectors $v,w\in\R^n$, 
vectorizing the matrix $\nabla_{v,w}(A) - \uno\uno^T$ via \eqref{eq:ABC} we obtain
\begin{align*}
   \vec(\nabla_{v,w}(A) - \uno\uno^T) & = 
   (D_v \otimes I - I \otimes D_w)\vec(A) - \uno \\
   & = \mathrm{Diag}(\vec(A))
   \big[v\otimes \uno - \uno\otimes w - \vec(\hinv{A}) \big] \\
   & = \mathrm{Diag}(\vec(A))
   \big[\vec\big(\Delta(v,w) - \hinv{A}\big) \big] .
\end{align*}
Hence, if $v,w$ are such that $\kappa_\star(A) = \|\hinv{A} - \Delta(v,w)\|_\star$
then
$$
   \beta_\star(A) \leq \|\nabla_{v,w}(A) - \uno\uno^T\|_\star
   \leq \|A\|_{\max} \|\Delta(v,w) - \hinv{A}\|_\star 
   = \|A\|_{\max} \kappa_\star(A) ,
$$
which proves the rightmost inequality in the claim.
With similar arguments,
$$
   \vec\big(\Delta(x,y) - \hinv{A}\big) =
   \mathrm{Diag}(\vec(A))^{-1} \vec(\nabla_{x,y}(A) - \uno\uno^T) .
$$
Noting that $\mathrm{Diag}(\vec(A))^{-1} = \mathrm{Diag}\big(\vec(\hinv{A})\big)$
and taking norms, we get
$$
   \|\Delta(x,y) - \hinv{A}\|_\star \leq 
   \|\hinv{A}\|_\che \|\nabla_{x,y}(A) - \uno\uno^T\|_\star .
$$
Take $v,w$ such that $\beta_\star(A) = \|\nabla_{v,w}(A) - \uno\uno\|_\star$.
Then,
\begin{align*}
   \beta_\star(A) = \|\nabla_{v,w}(A) - \uno\uno\|_\star
   & \geq \|\Delta(v,w) - \hinv{A}\|_\star / \|\hinv{A}\|_\che \\
   & \geq \min_{x,y}\|\Delta(x,y) - \hinv{A}\|_\star / \|\hinv{A}\|_\che
   = \kappa_\star(A) / \|\hinv{A}\|_\che ,
\end{align*}
and the proof is complete.
\end{proof}

\subsection{Minimizing the entrywise relative error}   

The constant $\beta_\star(A)$ introduced in \eqref{eq:defbeta}
can be interpreted as a measure of the closeness of a matrix $A$ to the set of Cauchy matrices. In this section, we show that the computation of $\beta_\fro(A)$
can be performed exactly and efficiently by solving a least squares problem. The solution to this problem
immediately provides a Cauchy matrix that is closest to $A$ in 
the sense that we specify hereafter, exactly as 
Algorithm \ref{alg:LL2} yields a Cauchy approximation to a given 
matrix $A$ by computing $\kappa_\fro(A)$.

\begin{remark}
Before proceeding further, we point out that the $(i,j)$-entry of the matrix $\nabla_{x,y}(A) - \uno\uno^T$ appearing in the 
definition of $\beta_\star(A)$ is $A_{ij}(x_i-y_j) - 1$.
The identities
$$
   \frac{|1/A_{ij} - (x_i-y_j)|}{1/|A_{ij}|} = 
   \big|A_{ij}(x_i-y_j) - 1\big| 
$$
and
$$
   \frac{|A_{ij} - 1/(x_i-y_j)|}{1/|x_i-y_j|} = 
   \big|A_{ij}(x_i-y_j) - 1\big| 
$$
suggest that the optimization of $\|\nabla_{x,y}(A) - \uno\uno^T\|_\star$ is related to
the minimization of entrywise relative errors 
between $A$ and the Cauchy matrix that approximates it. By comparison, note that  
$\kappa_\star(A)$ derives from minimizing the entrywise absolute error $|1/A_{ij} - (x_i-y_j)|$.
\end{remark}

When $\star = \fro$ then \eqref{eq:defbeta}
can be restated as follows:
$$
   \min_{x,y\in\R^n} \sum_{i,j} 
   (A_{ij}(x_i - y_j) - 1)^2 .
$$
This problem can be written in the
form of a linear least squares problem as follows:
\begin{equation}   \label{eq:betaF}  
   \min_{x,y} \| WU 
   \begin{bmatrix} x \\ y \end{bmatrix} 
   - \uno \|_2^2
\end{equation}
where $W = \mathrm{Diag}(\vec(A))$
and $U$ is the $n^2\times 2n$ matrix
$$
   U = \big[ \uno\otimes I \quad -I\otimes \uno \big] .
$$
Equation \eqref{eq:betaF} admits infinite solutions
corresponding to the different parametrizations
of a matrix in $\D$.
In fact, the rank of $U$ is $2n-1$, and the kernel of $U$
consists of the constant vectors.
So the normalized (i.e., least 2-norm) solution of \eqref{eq:betaF}
is given by
$$
   \begin{bmatrix} x \\ y \end{bmatrix} 
   = (WU)^+ \uno ,
$$
where the superscript ${}^+$ represents the Moore-Penrose inverse.

To proceed further, recall the following result from \cite{RaoMit}
on the Moore-Penrose inverse.

\begin{theorem}
    Let $M\in\R^{m\times n}$ with $m\geq n$
    and $\mathrm{rank}(M) = n-1$. Let 
    $v\in\R^n$ be a unit vector such that
    $Mv = 0$. Then the matrix
    $M^TM + vv^T$ is invertible and 
    $M^+ = (M^TM + vv^T)^{-1}M^T$. 
\end{theorem}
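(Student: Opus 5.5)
The plan is to check the Moore--Penrose conditions directly, after first proving invertibility. Set $G = M^TM + vv^T$.

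\emph{Step 1: $G$ is invertible.} Suppose $Gz=0$. Then
$$
0 = z^TGz = \|Mz\|_2^2 + (v^Tz)^2 ,
$$
so $Mz=0$ and $v^Tz=0$. Since $\mathrm{rank}(M)=n-1$, the kernel of $M$ is $\mathrm{Span}(v)$, so $z=\gamma v$. Then $v^Tz=\gamma\|v\|_2^2=\gamma=0$ because $v$ is a unit vector. Hence $z=0$ and $G$ is invertible.

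\emph{Step 2: compute $G^{-1}v$ and $G^{-1}M^TM$.} From $Mv=0$ and $v^Tv=1$ we get $Gv = v$, hence $G^{-1}v=v$. Writing $M^TM = G - vv^T$ then gives
$$
G^{-1}M^TM = I - G^{-1}vv^T = I - vv^T .
$$
So with $X = G^{-1}M^T$ we have $XM = I - vv^T$, which is the orthogonal projector onto $\mathrm{Span}(v)^\perp=\mathrm{Range}(M^T)$.

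\emph{Step 3: verify the four Penrose conditions for $X$.}
\begin{itemize}
\item $XM$ is symmetric, since it equals $I-vv^T$.
\item $MXM = M(I-vv^T) = M$, using $Mv=0$.
\item $XMX = (I-vv^T)G^{-1}M^T$. Because $G$ is symmetric, $v^TG^{-1} = (G^{-1}v)^T = v^T$. Hence $v^TG^{-1}M^T = v^TM^T = (Mv)^T = 0$, and so $XMX = X$.
\item $MX = MG^{-1}M^T$ is symmetric because $G^{-1}$ is symmetric.
\end{itemize}
By uniqueness of the Moore--Penrose inverse, $X = M^+$.

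The only step that needs any care is the identity $G^{-1}v = v$, together with its transpose $v^TG^{-1}=v^T$. Everything else is routine algebra.
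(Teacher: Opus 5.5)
Your proof is correct and complete. Each step checks out: invertibility of $G = M^TM + vv^T$ from $z^TGz = \|Mz\|_2^2 + (v^Tz)^2$ together with $\ker M = \mathrm{Span}(v)$; the identity $Gv = v$, which gives both $G^{-1}M^TM = I - vv^T$ and $v^TG^{-1}M^T = 0$; and the verification of the four Penrose conditions.

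The paper gives no proof of this statement; it simply quotes the result from Rao and Mitra. So there is no argument in the paper to compare against, and your direct verification is a self-contained justification of the formula the paper uses for the normalized least-squares solution.

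Two minor observations:
\begin{itemize}
\item The hypothesis $m \geq n$ is never used in your argument.
\item The unit-norm assumption on $v$ is inessential. For any nonzero $v \in \ker M$ you get $G^{-1}v = v/\|v\|_2^2$ and $XM = I - vv^T/\|v\|_2^2$, and the same Penrose verification goes through unchanged. In Step~1 you only need $v \neq 0$.
\end{itemize}
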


The matrix $W$ is nonsingular, due to the hypothesis $A_{ij} \neq 0$, and a unit vector in the kernel of $U$ is $v = \uno/\sqrt{2n}$. So the formula 
for the normalized solution of \eqref{eq:betaF}
is 
$$
   \begin{bmatrix} x \\ y \end{bmatrix} 
   = \bigg[ U^TW^2U + \frac{1}{2n} \uno\uno^T 
   \bigg]^{-1}U^T W \uno .
$$
Equivalently, the unknown parameters can be recovered from the solution of the linear system
\begin{equation}   \label{eq:normaleq}
   \bigg[ U^TW^2U + \frac{1}{2n} \uno\uno^T 
   \bigg]
   \begin{bmatrix} x \\ y \end{bmatrix}
   = U^T W \uno .
\end{equation}
The right-hand side of the linear system is
$$
   U^T W \uno = \begin{pmatrix} 
   b^{(1)} \\ b^{(2)} \end{pmatrix} 
$$
with 
$$
    b^{(1)}_{i} = \sum_{j=1}^n A_{ij}  
    \qquad
    b^{(2)}_{i} = \sum_{j=1}^n A_{ji} .
$$
The matrix $U^TW^2U$ has the $2\times 2$ block form
\begin{equation}\label{eq:UTW2U}
   U^TW^2U = \begin{pmatrix} 
   \mathrm{Diag}(d^{(1)}) & -B \\ 
   -B^T & \mathrm{Diag}(d^{(2)}) 
   \end{pmatrix} ,
\end{equation}
where $B$ is the entrywise square of $A$,
that is, $B_{ij} = A_{ij}^2$, and
for $i = 1,\ldots,n$
$$
    d^{(1)}_{i} = \sum_{j=1}^n A_{ij}^2  
    \qquad
    d^{(2)}_{i} = \sum_{j=1}^n A_{ji}^2 .
$$
The solution of \eqref{eq:betaF}
or \eqref{eq:normaleq}
generally has $O(n^3)$ computational cost.
We can reduce the problem of solving 
\eqref{eq:normaleq} to that of the solution of an $n\times n$ linear system.
First observe that, since the matrix $U^T W^2 U$ has rank $2n-1$ and $U^T W^2 U\uno =0$, then all the solutions of \eqref{eq:normaleq} are $[x; y]+\gamma \uno$, where $[x; y]$ is any solution and $\gamma$ is an arbitrary constant. However, the generators $x$ and $y$ are normalized exactly when the vector $[x;y]$ has zero sum. Therefore, we look for the solution $[x ; y] $ such that $\uno^T [x; y]=0$, so that we solve the system
\begin{equation}\label{eq:UTW2Usystem}
U^T W^2 U \begin{bmatrix}
    x \\ y
\end{bmatrix}= U^T W \uno.
\end{equation}
From the block structure \eqref{eq:UTW2U}, by applying one step of block Gaussian elimination, we find that \eqref{eq:UTW2Usystem} is equivalent to
\[
\begin{pmatrix}
  \operatorname{Diag}(d^{(1)}) & -B\\
  0 & S
\end{pmatrix}
\begin{bmatrix}
    x \\ y
\end{bmatrix} =
\begin{pmatrix}
  I & 0 \\
B^T  \operatorname{Diag}(d^{(1)})^{-1} & I
\end{pmatrix} \begin{bmatrix}
    b^{(1)} \\ -b^{(2)}
\end{bmatrix},
\]
where $S=\operatorname{Diag}(d^{(2)})-B^T \operatorname{Diag}(d^{(1)})^{-1} B$, that is equivalent to the following equations
\begin{align}
     x &= \operatorname{Diag}(d^{(1)})^{-1}( b^{(1)} + By),\label{eq:x}
    \\
   S y & = B^T  \operatorname{Diag}(d^{(1)})^{-1} b^{(1)} - b^{(2)}.\label{eq:y}
\end{align}
The matrix of the latter system is a singular M-matrix, such that $S\uno=0$. From the property $(S-\frac{1}{n}\uno \uno^T)^{-1}S=I-\frac{1}{n}\uno \uno^T$, we deduce that the general solution of \eqref{eq:y} is
\[
y=\left(S-\frac{1}{n}\uno \uno^T\right)^{-1} c+ \gamma \uno, 
\]
where $c= B^T  \operatorname{Diag}(d^{(1)})^{-1} b^{(1)} + b^{(2)}$ and $\gamma$ is an arbitrary constant.
Another approach to solve \eqref{eq:y} consists in considering the $(n-1)\times (n-1)$ nonsingular linear system $\hat S \hat y=\hat c$, where $\hat S$ is obtained by removing the last row and the last column of $S$ and $\hat c$ is obtained by removing the last entry of the right-hand side $c$. Then the vector $y=[\hat y; 0]$ is a particular solution of \eqref{eq:y} and all the solutions are $y+\gamma \uno$, with arbitrary $\gamma$. 

Once a solution $y$ of \eqref{eq:y} is computed, the vector $x$ can be recovered from \eqref{eq:x}. 
    Altogether, the solution of 
    \eqref{eq:betaF} can be computed at the  cost of
    $O(n^3)$ arithmetic operations. The resulting procedure is shown in Algorithm \ref{alg:last}.

\begin{algorithm}[ht]   \label{alg:last}
\LinesNumbered 
\SetNlSty{texttt}{}{.} 
\DontPrintSemicolon
\caption{Recovery of normalized generators from \eqref{eq:betaF} }   
\KwIn{Matrix $A = (A_{ij})$ with no zero entries}
\KwOut{Normalized generators $x,y$ }
\BlankLine
Compute the vectors $d^{(1)}$ and $d^{(2)}$ and the matrix $B$  in \eqref{eq:UTW2U} \;
$S=\operatorname{Diag}(d^{(2)})-B^T \operatorname{Diag}(d^{(1)})^{-1} B$ \;
Compute any solution $y$ in \eqref{eq:y} \;
Compute $x$ from \eqref{eq:x} \;
$\alpha = (\sum_{i=1}^n x_i + y_i)/(2n)$\;
$x = x - \alpha\uno$\;
$y = y - \alpha\uno$\;
\end{algorithm}


\section{Numerical experiments}
\label{sec:numerical}

For numerical tests, we implemented Algorithms \ref{alg:LL1}---\ref{alg:last} in MATLAB. In Algorithm~\ref{alg:last}, eq.~\eqref{eq:y} was solved by applying the backslash operator to the linear system $\left(S-\frac{1}{n}\uno \uno^T\right)y=c$.  
The experiments were run in MATLAB R2024b on a Dell XPS 13 9340 laptop equipped with an Intel Core Ultra 7 155H processor and $32$ GB of RAM, running Ubuntu 24.04.2 LTS. 

\begin{myexample}\label{ex:ex1}
\end{myexample}


We choose a set of uniformly spaced and interlaced Cauchy points,
$x_i = i/n$ and $y_i = x_i + 1/(2n)$ for $i = 1,\ldots,n$,
and set $C = \cau(x,y)$.
This configuration emphasizes the performance differences among the algorithms.
Firstly, we set $n = 100$. 
For several noise levels $\delta\in[10^{-9},10^{-1}]$ we apply a multiplicative componentwise perturbation to $C$:
$$ A_{i,j}=(1\pm\delta)C_{i,j}$$
where the sign in the perturbation factor is chosen uniformly at random. 
The same sign pattern is used for all values of $\delta$.
This construction allows us to consider $\delta$ as a precise estimate of $\beta_{\max}(A)$, since the absolute value of every entry of the matrix $\nabla_{x,y}(A) - \uno\uno^T$ is $\delta$. 


We then apply Algorithms \ref{alg:LL1}--\ref{alg:last} to $A$ and compute Cauchy matrices $C_1$, $C_2$, $C_3$, $C_4$ from the Cauchy points obtained from the algorithms in the previous sections. For Algorithm~\ref{alg:generaluv}, vectors $v$ and $w$ are chosen such that $v_j=w_j=2(n-j+1)/n(n+1)$, i.e., entries are positive, uniformly decreasing
and have unit sum.

Figure \ref{fig:esp1} shows the relative normwise errors 
$\|C-C_i\|_\fro/\|C\|_\fro$ (left panel) and $\|A-C_i\|_\fro/\|A\|_\fro$
(right panel). They visually appear to be very close to each other. Algorithm \ref{alg:last} provides the best approximation quality in this example by several orders of magnitude, while Algorithm 1 is the least accurate. The dotted line in the figure shows $\beta_{\max}(A)$, that is $\delta$.
This information is included in relation to Theorem \ref{thm:LL1}, since 
$\beta_{\max}(A)$ is a lower bound for the constant $\beta$ appearing in that theorem. Remarkably, Algorithm \ref{alg:last} produces errors that are below this value.

We also compare the performances of Algorithms \ref{alg:LL1}--\ref{alg:last} on matrices of increasing size. Here $\delta=10^{-5}$ is fixed, and matrix size increases from $100$ to $2000$. 
Figure \ref{fig:esp1_bis} shows relative normwise errors for the four algorithms. 
As in the previous case, Algorithm \ref{alg:last} yields the best approximation quality; moreover, in contrast to the other algorithms, the relative approximation errors are quite insensitive to $n$. 

\begin{figure}
    \centering
  \begin{tikzpicture}[scale=0.9]
  \usetikzlibrary{plotmarks}:
        \begin{loglogaxis}[
                legend style={at={(0.98,0.02)},anchor=south east},
                legend columns=2,
                width = .5\linewidth, height = .3\textheight,
                xlabel = {$\delta$}, 
                ylabel = {$\| C-C_i\|_\fro/\|C\|_\fro$}, 
                grid = major,
	major grid style = {lightgray},
	minor grid style = {lightgray!25}
                           ]
            \addplot 
            table[x index = 1, y index = 2] {anc/esp1data.txt};
            \addplot 
            table[x index = 1, y index = 3] {anc/esp1data.txt};
            \addplot [mark=triangle*, mark size=3pt, color=brown!70!black]
            table[x index = 1, y index = 4] {anc/esp1data.txt};
            \addplot 
            table[x index = 1, y index = 5] {anc/esp1data.txt};
            \addplot [color=green!40!black, style={dashed, line width=1pt}]
            table[x index = 1, y index = 1] {anc/esp1data.txt};
          \legend{
          $i = 1$, 
          $i = 2$, 
          $i = 3$, 
          $i = 4$  
          }
        \end{loglogaxis}
  \end{tikzpicture} \hfill
  \begin{tikzpicture}[scale=0.9]
  \usetikzlibrary{plotmarks}:
        \begin{loglogaxis}[
                legend style={at={(0.98,0.02)},anchor=south east},
                legend columns=2,
                width = .5\linewidth, height = .3\textheight,
                xlabel = {$\delta$}, 
                ylabel = {$\| A-C_i\|_\fro/\|A\|_\fro$}, 
                grid = major,
	major grid style = {lightgray},
                           ],
            \addplot 
            table[x index = 1, y index = 6] {anc/esp1data.txt};
            \addplot 
            table[x index = 1, y index = 7] {anc/esp1data.txt};
            \addplot [mark=triangle*, mark size=3pt, color=brown!70!black] 
            table[x index = 1, y index = 8] {anc/esp1data.txt};
            \addplot 
            table[x index = 1, y index = 9] {anc/esp1data.txt};
            \addplot [color=green!40!black, style={dashed, line width=1pt}]
            table[x index = 1, y index = 1] {anc/esp1data.txt};
          \legend{
          $i = 1$, 
          $i = 2$, 
          $i = 3$, 
          $i = 4$  
          }
        \end{loglogaxis}
        \end{tikzpicture}
    \caption{Relative normwise errors for Example \ref{ex:ex1}. Here $C$ is the original Cauchy matrix and $C_i$, $i=1,2,3,4$ are the Cauchy matrices recovered from the corresponding algorithms from $A$. The parameter $\delta$ measures the magnitude of entrywise relative perturbations applied to $C$.
    The dashed line represents $\beta_{\max}(A)$. 
    Matrix size is $100\times 100$.}
    \label{fig:esp1}
\end{figure}
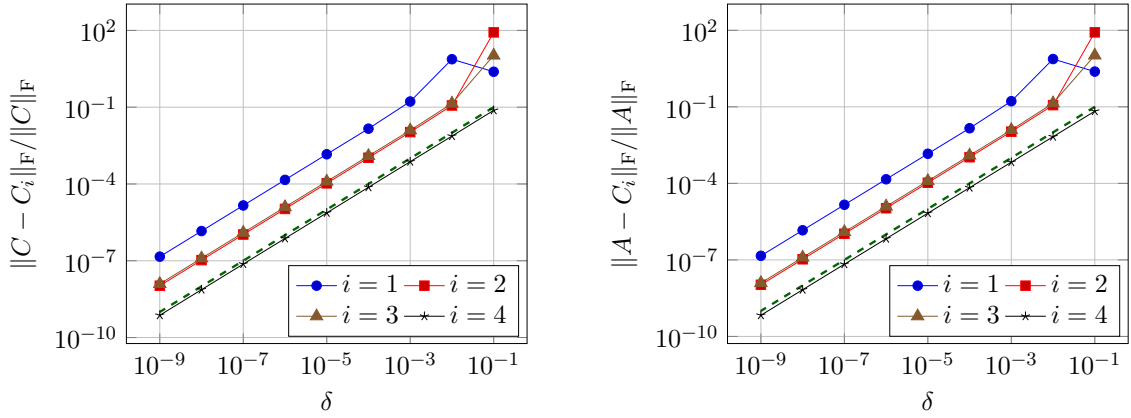

\begin{figure}
    \centering
      \begin{tikzpicture}[scale=0.9]
        \begin{semilogyaxis}[
               legend style={at={(0.98,0.15)},anchor=south east},
                legend columns=2,
                width = .5\linewidth, height = .3\textheight,
                xlabel = {$n$}, 
                ylabel = {$\| C-C_i\|_\fro/\|C\|_\fro$}, 
                xtick = {0,500,1000,1500,2000},  
                xticklabels = {$0$, $500$, $1000$, $1500$, $2000$},
                grid = major,
	major grid style = {lightgray},
	minor grid style = {lightgray!25}
                           ]
            \addplot table[x index = 1, y index = 2] {anc/esp1bdata.txt};
            \addplot table[x index = 1, y index = 3] {anc/esp1bdata.txt};
            \addplot [mark=triangle*, mark size=3pt, color=brown!70!black]
            table[x index = 1, y index = 4] {anc/esp1bdata.txt};
            \addplot table[x index = 1, y index = 5] {anc/esp1bdata.txt};
          \legend{
          $i = 1$, 
          $i = 2$, 
          $i = 3$, 
          $i = 4$  
                }
        \end{semilogyaxis}
    \end{tikzpicture} \hfill
  \begin{tikzpicture}[scale=0.9]
        \begin{semilogyaxis}[
               legend style={at={(0.98,0.15)},anchor=south east},
                legend columns=2,
                width = .5\linewidth, height = .3\textheight,
                xlabel = {$n$}, 
                ylabel = {$\| A-C_i\|_\fro/\|A\|_\fro$}, 
                xtick = {0,500,1000,1500,2000},  
                xticklabels = {$0$, $500$, $1000$, $1500$, $2000$},
                grid = major,
	minor tick num = 1,
	major grid style = {lightgray},
	minor grid style = {lightgray!25}
                           ]
            \addplot table[x index = 1, y index = 6] {anc/esp1bdata.txt};
            \addplot table[x index = 1, y index = 7] {anc/esp1bdata.txt};
            \addplot [mark=triangle*, mark size=3pt, color=brown!70!black]
            table[x index = 1, y index = 8] {anc/esp1bdata.txt};
            \addplot table[x index = 1, y index = 9] {anc/esp1bdata.txt};
          \legend{
          $i = 1$, 
          $i = 2$, 
          $i = 3$, 
          $i = 4$  
                }
        \end{semilogyaxis}
    \end{tikzpicture}
    \caption{Relative normwise errors for Example \ref{ex:ex1}. Here $C$ is the original Cauchy matrix and $C_i$, $i=1,2,3,4$ are the Cauchy matrices recovered from the corresponding algorithms from the perturbed matrix $A$ with $\delta=10^{-5}$. Matrix size increases from $100$ to $2000$ by steps of $100$.}
    \label{fig:esp1_bis}
\end{figure}
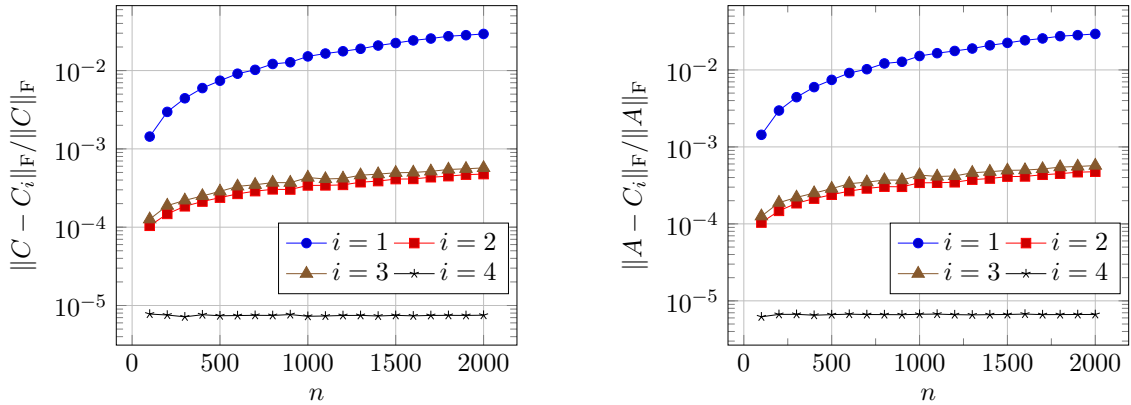

\begin{myexample}\label{ex:ex2}
\end{myexample}

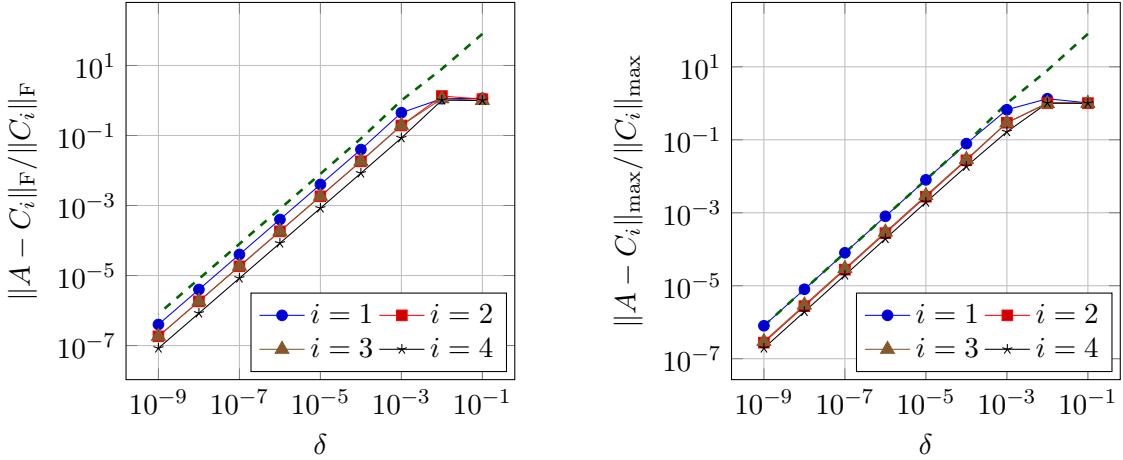
\begin{figure}
    \centering
      \begin{tikzpicture} 
        \begin{loglogaxis}[
               legend style={at={(0.98,0.02)},anchor=south east},
                legend columns=2,
                width = .45\linewidth, height = .3\textheight,
                xlabel = {$\delta$}, 
                ylabel = {$\| A-C_i\|_\fro/\|C_i\|_\fro$}, 
                grid = both,
	major grid style = {lightgray},
	minor grid style = {lightgray!25}
                           ]
            \addplot table[x index = 1, y index = 2] {anc/esp2data.txt};
            \addplot table[x index = 1, y index = 3] {anc/esp2data.txt};
            \addplot [mark=triangle*, mark size=3pt, color=brown!70!black]
            table[x index = 1, y index = 4] {anc/esp2data.txt};
            \addplot table[x index = 1, y index = 5] {anc/esp2data.txt};
            \addplot [color=green!40!black, style={dashed, line width=1pt}] 
                     table[x index = 1, y index = 10] {anc/esp2data.txt};
          \legend{
          $i = 1$, 
          $i = 2$, 
          $i = 3$, 
          $i = 4$  
                }
        \end{loglogaxis}
    \end{tikzpicture} \hfill
  \begin{tikzpicture} 
        \begin{loglogaxis}[
               legend style={at={(0.98,0.02)},anchor=south east},
                legend columns=2,
                width = .45\linewidth, height = .3\textheight,
                xlabel = {$\delta$}, 
                ylabel = {$\| A-C_i\|_{\max}/\|C_i\|_{\max}$}, 
                grid = major,
	major grid style = {lightgray},
                           ]
            \addplot table[x index = 1, y index = 6] {anc/esp2data.txt};
            \addplot table[x index = 1, y index = 7] {anc/esp2data.txt};
            \addplot [mark=triangle*, mark size=3pt, color=brown!70!black]
            table[x index = 1, y index = 8] {anc/esp2data.txt};
            \addplot table[x index = 1, y index = 9] {anc/esp2data.txt};
            \addplot [color=green!40!black, style={dashed, line width=1pt}] 
                     table[x index = 1, y index = 10] {anc/esp2data.txt};
          \legend{
          $i = 1$, 
          $i = 2$, 
          $i = 3$, 
          $i = 4$  
                }
        \end{loglogaxis}
    \end{tikzpicture}
    \caption{Relative normwise errors for Example \ref{ex:ex2}. Here $C_i$, $i=1,2,3,4$ are the Cauchy matrices recovered by the corresponding algorithms. The dashed lines represent
    the upper bound from \eqref{eq:thm3.5}. 
    Matrix size is $100\times 100$.}
    \label{fig:esp2}
\end{figure}

This experiment aims to illustrate the validity of a bound from Theorem \ref{thm:kappastargen}. 
We generate Cauchy points as in Example \ref{ex:ex1} with $n=100$. 
For several noise levels $\delta\in[10^{-9},10^{-1}]$ we apply an additive componentwise perturbation to $D = \Delta(x,y)$, that is, we set $Z_{ij} = D_{ij} \pm \delta$,
where the sign in the perturbation is chosen uniformly at random. 
The same sign pattern is used for all values of $\delta$.
Then we set $A = \hinv{Z}$.
This construction allows us to consider $\delta$ as a precise estimate of $\kappa_{\max}(A)$, since the absolute value of every entry of the matrix $\hinv{A} - D$ is $\delta$. 


For $i = 1,\ldots,4$, we denote $C_i$ the Cauchy matrices from the Cauchy points
computed by Algorithm $i$.
Figure \ref{fig:esp2} shows the approximation errors 
$\|A-C_i\|_{\fro}/\|C_i\|_{\fro}$ (left panel) and $\|A-C_i\|_{\max}/\|C_i\|_{\max}$ (right panel). The dotted line represents the quantity $4\kappa_{\max}(A)\|A\|_{\max}$,
which is the approximation error bound in Theorem \ref{thm:kappastargen}. 
The `Cauchyness' criterion employed by 
Algorithm \ref{alg:last} appears to be more effective at recovering an approximation of the original matrix than the other algorithms.
By contrast, Algorithm \ref{alg:LL1} produces the worst approximation, with an error that nearly reaches the upper bound.




\begin{myexample}\label{ex:ex3}
\end{myexample}

\begin{figure}
    \centering
\begin{tikzpicture}
\begin{axis}[
	xmin = 2.9, xmax = 3.9,
	ymin = -2.5, ymax = 1,
	width = .85\textwidth,
	height = 0.5\textwidth,
	xtick distance = 0.2,
	ytick distance = 0.5,
	grid = both,
	major grid style = {lightgray},
	minor grid style = {lightgray!25},
	xlabel = {$\log_{10}n$},
	ylabel = {$\log_{10}$(time)},
	legend cell align = {left},
	legend pos = north west
]

\addplot[
	teal, 
	only marks
] table[x index=0, y index=1] {anc/time4datanew.txt};

\addplot[
	magenta, 
	only marks
] table[x index=0, y index=1] {anc/ex4timing2.txt};

\addplot[
	very thick,
    dashed,
	orange
] table[
	x index=0,
	y = {create col/linear regression}
] {anc/time4datanew.txt};
\xdef\slope4{\pgfplotstableregressiona}
\addplot[
	thick,
	blue
] table[
	x index=0,
	y = {create col/linear regression}
] {anc/ex4timing2.txt};
\xdef\slopetwo{\pgfplotstableregressiona}
\addlegendentry{\small Timing data, Alg.4}
\addlegendentry{\small Timing data, Alg.2}
\addlegendentry{\small
	  $\alpha = {}$ \pgfmathprintnumber{\slope4}
    }
\addlegendentry{\small
	  $\alpha = {}$ \pgfmathprintnumber{\slopetwo}
    }
\end{axis}
\end{tikzpicture}
    
    \caption{Timings and linear fits for Algorithm
    \ref{alg:LL2} and \ref{alg:last}, in log-log scale. The slope of the linear fits gives the exponent in the power law $cn^\alpha$.}
    \label{fig:time4}
\end{figure}
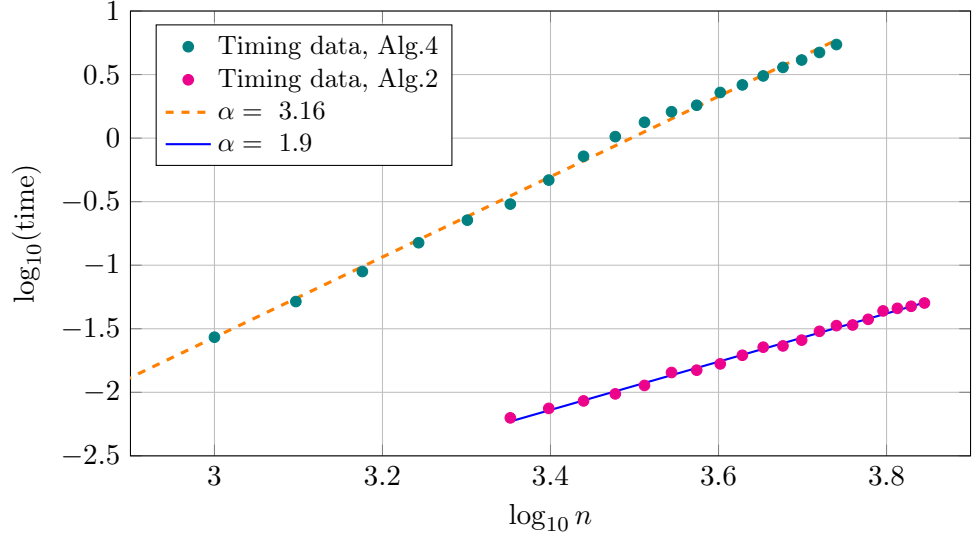  

As pointed out at the end of Section \ref{sec:displacementbased}, a complexity analysis for Algorithm \ref{alg:last} gives an asymptotic computational cost of $O(n^3)$ arithmetic operations. 
The analogous cost for Algorithm \ref{alg:LL2} is $O(n^2)$, due to the matrix-vector products in the algorithm.
In this example, we perform a timing test for Algorithms
\ref{alg:LL2} and \ref{alg:last} with $n$ ranging from $500$ to $5000$. 
For each matrix size, we compute the average time of ten runs.
Then we fit the observed execution times to power laws of the form $cn^\alpha$ using linear regressions on a log-log scale. The results are shown in Figure \ref{fig:time4}. 
The exponents computed by linear regressions closely match the theoretical values.

\begin{myexample}\label{ex:ex4}
\end{myexample}

\begin{figure}
    \centering
      \begin{tikzpicture}[scale=0.9]
        \begin{semilogyaxis}[
               legend style={at={(0.98,0.5)},anchor=south east},
                legend columns=2,
                width = .5\linewidth, height = .3\textheight,
                xlabel = {$n$}, 
                ylabel = {$\| C-C_i\|_\fro/\|C\|_\fro$}, 
                xtick = {0,500,1000,1500,2000},  
                xticklabels = {$0$, $500$, $1000$, $1500$, $2000$},
                grid = both,
	major grid style = {lightgray},
	minor grid style = {lightgray!25}
                           ]
            \addplot table[x index = 1, y index = 2] {anc/esp4data.txt};
            \addplot table[x index = 1, y index = 3] {anc/esp4data.txt};
            \addplot [mark=triangle*, mark size=3pt, color=brown!70!black]
            table[x index = 1, y index = 4] {anc/esp4data.txt};
            \addplot table[x index = 1, y index = 5] {anc/esp4data.txt};
        \end{semilogyaxis}
    \end{tikzpicture} \hfill
  \begin{tikzpicture}[scale=0.9]
        \begin{semilogyaxis}[
               legend style={at={(0.98,0.02)},anchor=south east},
                legend columns=2,
                width = .5\linewidth, height = .3\textheight,
                xlabel = {$n$}, 
                ylabel = {$\| A-C_i\|_\fro/\|A\|_\fro$}, 
                xtick = {0,500,1000,1500,2000},  
                xticklabels = {$0$, $500$, $1000$, $1500$, $2000$},
                grid = major,
	major grid style = {lightgray},
                           ]
            \addplot table[x index = 1, y index = 6] {anc/esp4data.txt};
            \addplot table[x index = 1, y index = 7] {anc/esp4data.txt};
            \addplot [mark=triangle*, mark size=3pt, color=brown!70!black]
            table[x index = 1, y index = 8] {anc/esp4data.txt};
            \addplot table[x index = 1, y index = 9] {anc/esp4data.txt};
          \legend{
          $i = 1$, 
          $i = 2$, 
          $i = 3$, 
          $i = 4$  
                }
        \end{semilogyaxis}
    \end{tikzpicture}
    \caption{Relative normwise errors for Example \ref{ex:ex4}. Here $C_i$, $i=1,2,3,4$ are the Cauchy matrices recovered by the corresponding algorithms. The matrix $A$ is subject to an unbalanced perturbation with size $\delta=10^{-5}$. Matrix size increases from $100$ to $2000$ by steps of $100$.}
    \label{fig:esp4}
\end{figure}
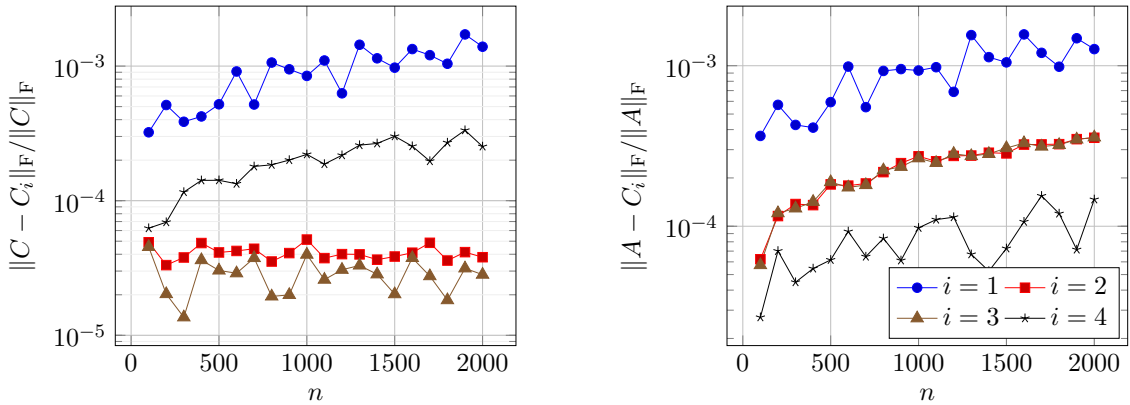

In Example \ref{ex:ex1}, the perturbation is evenly spread across the test matrix. It is therefore understandable that Algorithm \ref{alg:LL2} works slightly better than Algorithm \ref{alg:generaluv} applied with uniformly decreasing vectors $v$ and $w$. On a matrix where the perturbation mainly affects the trailing principal block, we expect the opposite behavior. To verify this conjecture numerically, we set up test data as in the second part of Example \ref{ex:ex1}, except that the perturbation applied to entry $(i,j)$ is weighted by a factor $n/(n-i+1)(n-j+1)$. Figure \ref{fig:esp4} shows that Algorithm \ref{alg:generaluv} does indeed provide 
the best reconstruction of the original Cauchy matrix. 
However, the best approximation to $A$ is given by Algorithm \ref{alg:last}.

\begin{myexample}\label{ex:ex5}
\end{myexample}

\begin{figure}
    \centering
      \begin{tikzpicture}[scale=0.9]
        \begin{semilogyaxis}[
               legend style={at={(0.98,0.5)},anchor=south east},
                legend columns=2,
                width = .5\linewidth, height = .3\textheight,
                xlabel = {$n$}, 
                ylabel = {$\| Z-D_i\|_\fro$}, 
                xtick = {0,500,1000,1500,2000},  
                xticklabels = {$0$, $500$, $1000$, $1500$, $2000$},
                grid = both,
	major grid style = {lightgray},
	minor grid style = {lightgray!25}
                           ]
            \addplot table[x index = 1, y index = 2] {anc/esp5data.txt};
            \addplot table[x index = 1, y index = 3] {anc/esp5data.txt};
            \addplot [mark=triangle*, mark size=3pt, color=brown!70!black]
            table[x index = 1, y index = 4] {anc/esp5data.txt};
            \addplot table[x index = 1, y index = 5] {anc/esp5data.txt};
          \legend{
          $i = 1$, 
          $i = 2$, 
          $i = 3$, 
          $i = 4$  
                }
        \end{semilogyaxis}
    \end{tikzpicture} \hfill
  \begin{tikzpicture}[scale=0.9]
        \begin{semilogyaxis}[
               legend style={at={(0.98,0.5)},anchor=south east},
                legend columns=2,
                width = .5\linewidth, height = .3\textheight,
                xlabel = {$n$}, 
                ylabel = {$\| A-C_i\|_\fro/\|A\|_\fro$}, 
                xtick = {0,500,1000,1500,2000},  
                xticklabels = {$0$, $500$, $1000$, $1500$, $2000$},
                grid = major,
	major grid style = {lightgray},
                           ]
            \addplot table[x index = 1, y index = 6] {anc/esp5data.txt};
            \addplot table[x index = 1, y index = 7] {anc/esp5data.txt};
            \addplot [mark=triangle*, mark size=3pt, color=brown!70!black]
            table[x index = 1, y index = 8] {anc/esp5data.txt};
            \addplot table[x index = 1, y index = 9] {anc/esp5data.txt};
          \legend{
          $i = 1$, 
          $i = 2$, 
          $i = 3$, 
          $i = 4$  
                }
        \end{semilogyaxis}
    \end{tikzpicture}
    \caption{Normwise errors for Example \ref{ex:ex5}. 
    Here $C_i$, $i=1,2,3,4$ are the Cauchy matrices recovered from the corresponding algorithms, and $D_i = \hinv{C_i}$. The matrix $A = \hinv{Z}$ is subject to a worst-case perturbation with size $\delta=10^{-5}$. Matrix size increases from $100$ to $2000$ by steps of $100$.}
    \label{fig:esp5}
\end{figure}
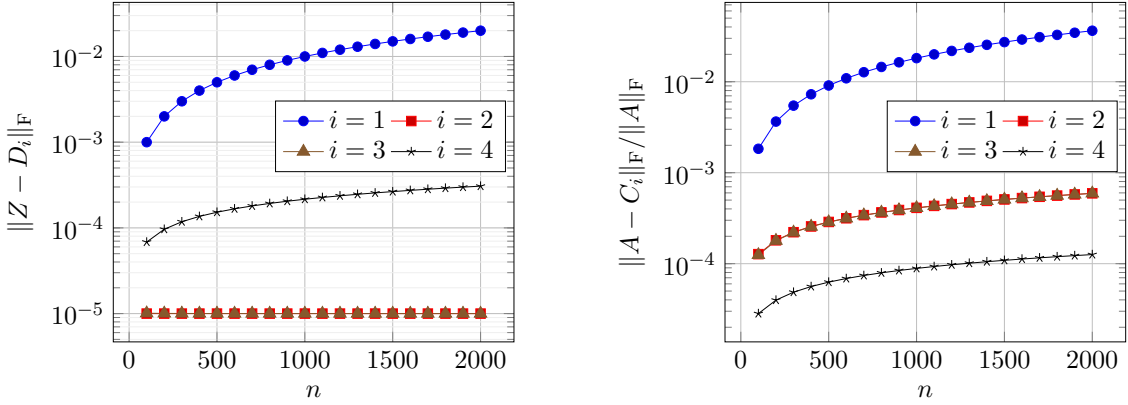

In this example, we focus on errors for componentwise inverses of Cauchy matrices. Recall that Theorem \ref{thm:kappastargen} provides an upper bound for such errors, which depends on the norm of the projection operator associated with the chosen algorithm. From the proof of Theorem \ref{thm:kappastargen}, one may also pinpoint worst-case perturbations that make the bound sharp (i.e., equality is attained). Such perturbations can be chosen as scalar multiples of the matrix obtained from a square columnwise reshape of the dominant right singular vector of the operator $M_w\otimes M_v$. For instance, for the operator $M_{e_1}\otimes M_{e_1}$ associated with Algorithm \ref{alg:LL1} such a worst-case perturbation is the $n\times n$ matrix 
$Y = vv^T/v^Tv$ where $v = (1,-1/(n-1),\ldots,-1/(n-1))^T\in\R^n$ is the right singular vector of $M_{e_1}$ associated with the dominant singular value, $\|M_{e_1}\|_2 = \sqrt{n}$. Hence, 
\begin{align*}
Y(1,1)&=\frac{n-1}{n},\\
Y(1,i)&=Y(i,1)=-\frac1n,\qquad & i=2,\ldots n,\\
Y(i,j)&= \frac{1}{n(n-1)},  & i,j=2,\ldots,n.
\end{align*}
Notably, this matrix is orthogonal to $\mathcal{D}$ with respect to the Frobenius inner product. This can be verified by computing the projection of $Y$ onto $\mathcal{D}$, using the projector $\Phi_2$ in Theorem \ref{thm:1+2}. The projection is zero, as $\uno^Tv = 0$.
Consequently, if $A$ is any matrix such that $\hinv{A} = D + \delta Y$ for some $\delta\geq 0$ and $D\in\mathcal{D}$ then $\kappa_\fro(A) = \delta$.
Moreover, if $D_1\in\mathcal{D}$ denotes the matrix recovered by Algorithm \ref{alg:LL1}
then $\|\hinv{A} - D_1\|_\fro = \|M_{e_1}\|_2^2\, \kappa_\fro(A) = n\delta$,
that is, we have equality in Theorem \ref{thm:1+2}. 

For $n$ ranging from $100$ to $2000$ we define a Cauchy matrix $C$ as in Example \ref{ex:ex1}, compute its componentwise inverse $D$ and consider the perturbed matrix
 $Z=D+\delta Y$,
with $\delta=10^{-5}$. Algorithms \ref{alg:LL1}--\ref{alg:last} are applied to $A=Z^{[-1]}$, and we recover matrices $D_i\in\mathcal{D}$ and $C_i = \hinv{D_i}$ for $i=1, 2, 3, 4$. 
The left panel in Figure \ref{fig:esp5} shows errors $\|Z-D_i\|_\fro$.
In accordance with Theorem \ref{thm:kappastargen} we expect 
$\|Z-D_1\|_\fro = n\delta$ and $\|Z-D_2\|_\fro = \delta$,
as it actually occurs numerically. The relative errors $\|A-C_i\|_\fro/\|A\|_\fro$ shown in the right panel increase as $O(n)$,
corresponding to the growth of $\|A\|_{\max}$. Also in this example, 
the results of Algorithms \ref{alg:LL2} and \ref{alg:generaluv}
are almost indistinguishable, while Algorithm \ref{alg:last} provides the best approximation to $A$, but not to $C$.


\section*{Acknowledgements}
This work was partially supported by the Italian Ministry of University and Research (MUR) through the PRIN 2022 ``Low-rank Structures and Numerical Methods in Matrix and Tensor Computations and their Application'' code 20227PCCKZ MUR D.D. financing decree n.\ 104 of February 2nd, 2022 (CUP I53D23002280006 and CUP E53D23005520006), and through the MUR Excellence Department Project awarded to the Department of Mathematics, University of Pisa, CUP I57G22000700001.
The authors are also affiliated
to the INdAM-GNCS (Gruppo Nazionale di Calcolo Scientifico). 

\appendix
\section{A different parametrization of $\D$}\label{sec:xyalpha}

Let $\Rn0$ denote the set of zero-sum vectors in $\R^n$:
$$
   \Rn0 = \{x\in\R^n: \uno^T x  = 0\} .
$$
The matrix space $\mathcal{D}$
introduced in \eqref{eq:defD}
admits the following alternative description:
$$
   \mathcal{D} = \{\hat x\uno^T + \uno \hat y^T + \alpha \uno\uno^T:
   \hat x,\hat y\in \Rn0,\ \alpha\in\R\} .
$$
More precisely, every matrix $D\in\mathcal{D}$ can be written 
in a unique way as 
\begin{equation}\label{eq:ourdecomp}
   D = \hat x\uno^T + \uno \hat y^T + \alpha \uno\uno^T
\end{equation}
for some  $\hat x,\hat y\in \Rn0$ and $\alpha\in\R$. It is interesting to study the properties of this decomposition, which differs from the one using normalized generators introduced in \cite{liesen2016fast} and recalled in Section \ref{sec:preliminaries}.
Notably, the three terms in the right-hand side of \eqref{eq:ourdecomp} are mutually orthogonal
for the Frobenius inner product.
Henceforth, for any $D\in\D$ let $\mathcal{R}(D)$ be the vector
$[\hat x; \hat y; \sqrt{n}\alpha]\in\R^{2n+1}$ identified by \eqref{eq:ourdecomp}.
We immediately obtain that 
$\|D\|_\fro = \sqrt{n}\|\mathcal{R}(D)\|_2$.
Indeed, the orthogonality of the three terms in \eqref{eq:ourdecomp} gives
$$
   \|D\|_\fro^2 = \|\hat x\uno^T\|_\fro^2 + \|\uno \hat y^T\|_\fro^2 + 
   \|\alpha \uno\uno^T\|_\fro^2
   = n \big( \|\hat x\|_2^2 + \|\hat y\|_2^2 +n\alpha^2 \big)
   = n \|\mathcal{R}(D)\|_2^2 .
$$
Thus, apart of the factor $\sqrt{n}$, the map 
$\mathcal{R}$ is an isometric bijection between the metric space $(\D,\|\cdot\|_\fro)$ and the subspace $\Rn0\times\Rn0\times\R$ of $\R^{2n+1}$ endowed with the $2$-norm.
As such, it produces a perfectly well conditioned representation of matrices in $\D$.
This motivates us to evaluate the conditioning of the representation 
provided by normalized generators, as shown below.

\begin{theorem}
Let $\bar x$ and $\bar y$ be the normalized generators 
of $D\in\D$. Then
$$
   \|\mathcal{R}(D)\|_2 \leq \| [\bar x; \bar y] \|_2
   \leq \sqrt{2} \|\mathcal{R}(D)\|_2 ,
$$
with attainable equalities.
\end{theorem}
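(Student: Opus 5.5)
The plan is to compute both norms explicitly in terms of the data $(\hat x,\hat y,\alpha)$ of the decomposition \eqref{eq:ourdecomp}, and then compare the two resulting quadratic forms.

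\textbf{Step 1: express $\bar x,\bar y$ through \eqref{eq:ourdecomp}.} Let $D=\bar x\uno^T-\uno\bar y^T$ with $(\bar x,\bar y)$ normalized. Split each generator into its zero-sum part and its mean: $\bar x=\hat x+a\uno$ and $\bar y=\tilde y+b\uno$ with $\hat x,\tilde y\in\Rn0$. Then
$$D=\hat x\uno^T+\uno(-\tilde y)^T+(a-b)\uno\uno^T .$$
By the uniqueness of \eqref{eq:ourdecomp} this gives $\hat y=-\tilde y$ and $\alpha=a-b$. Normalized generators are those with $\uno^T\bar x+\uno^T\bar y=0$ (the minimal-norm shift along $\uno$, as recalled in Section~\ref{sec:preliminaries}). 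Hence $n(a+b)=0$, so $b=-a$ and $a=\alpha/2$.

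\textbf{Step 2: compute $\|[\bar x;\bar y]\|_2$.} Since $\hat x\perp\uno$ and $\tilde y\perp\uno$, Pythagoras gives
$$\|[\bar x;\bar y]\|_2^2=\|\hat x\|_2^2+\|\hat y\|_2^2+\tfrac{n}{2}\alpha^2 .$$

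\textbf{Step 3: compare with $\|\mathcal{R}(D)\|_2^2$.} Write $s=\|\hat x\|_2^2+\|\hat y\|_2^2$. Then $\|\mathcal{R}(D)\|_2^2=s+c\,\alpha^2$, where $c\,\alpha^2$ is the square of the last entry of $\mathcal{R}(D)$. Both sides are sums of the same nonnegative term $s$ and a multiple of $\alpha^2$, so the two-sided bound
$$s+c\alpha^2\le s+\tfrac n2\alpha^2\le 2(s+c\alpha^2)$$
reduces to the scalar condition $c\le n/2\le 2c$. Equality cases come from the extremes: the lower bound is attained when $\alpha=0$, and the upper bound when $s=0$ and $n/2=2c$.

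\textbf{Main obstacle: the normalization constant of the last entry of $\mathcal{R}(D)$.} This is the only delicate point, and Steps 1–2 are routine.
\begin{itemize}
\item The stated bounds, with both equalities attainable, correspond exactly to $c=n/4$, i.e.\ last entry $\sqrt n\,\alpha/2$.
\item With the literal entry $\sqrt n\,\alpha$ from the definition ($c=n$), Step 3 instead yields the reversed chain $\tfrac1{\sqrt2}\|\mathcal{R}(D)\|_2\le\|[\bar x;\bar y]\|_2\le\|\mathcal{R}(D)\|_2$. So the claim as worded would fail, for instance for $D=\uno\uno^T$.
\end{itemize}
I would therefore first recheck this scaling against the isometry computation $\|D\|_\fro=\sqrt n\|\mathcal{R}(D)\|_2$. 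That computation already gives $\|\alpha\uno\uno^T\|_\fro^2=n^2\alpha^2$, consistent with $c=n$, so the definition and the stated inequality look mutually inconsistent. I would then present Steps 1–3 with $c$ kept explicit, and conclude the statement under the scaling for which it holds ($c=n/4$), noting the discrepancy.
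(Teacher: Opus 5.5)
Your computation and your diagnosis are both right. With $\mathcal{R}(D)=[\hat x;\hat y;\sqrt n\,\alpha]$ as defined, one has exactly
$\|\mathcal{R}(D)\|_2^2=\|\hat x\|_2^2+\|\hat y\|_2^2+n\alpha^2$
and
$\|[\bar x;\bar y]\|_2^2=\|\hat x\|_2^2+\|\hat y\|_2^2+\tfrac n2\alpha^2$.
So the displayed chain is false as written. Your example $D=\uno\uno^T$ violates the left inequality, and so does the paper's own equality example $\bar x=\uno$, $\bar y=-\uno$.

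This is also what the paper's proof actually establishes. The paper writes $\mathcal{R}(D)=V[x;y]$, computes an SVD of $V$ to get $\|V\|_2=\sqrt2$ and $\|V^+\|_2=1$, and uses that the normalized generators are the minimum-norm preimage $V^+\mathcal{R}(D)$. These facts give $\|\mathcal{R}(D)\|_2\le\sqrt2\,\|[\bar x;\bar y]\|_2$ and $\|[\bar x;\bar y]\|_2\le\|\mathcal{R}(D)\|_2$, that is,
$\|[\bar x;\bar y]\|_2\le\|\mathcal{R}(D)\|_2\le\sqrt2\,\|[\bar x;\bar y]\|_2$.
The equality cases listed there ($\bar x,\bar y\in\R^n_0$ on the left; $\bar x=\uno$, $\bar y=-\uno$ on the right) fit this swapped chain, not the displayed one. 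So the statement simply has the two quantities interchanged.

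The repair you lean towards, rescaling the last entry of $\mathcal{R}(D)$ to $\sqrt n\,\alpha/2$, is the wrong one. It destroys the isometry $\|D\|_\fro=\sqrt n\,\|\mathcal{R}(D)\|_2$, which is the reason for introducing $\mathcal{R}$ and is used in Theorem~\ref{thm:almost3.6}. Keep $c=n$ and state the swapped chain. Your Steps 1--3 then prove it completely, with equality on the left iff $\alpha=0$ and on the right iff $\hat x=\hat y=0$.

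On the route itself: you use an exact Pythagorean identity in the mutually orthogonal coordinates $(\hat x,\hat y,\alpha)$. This is more elementary than the paper's argument and gives the equality characterization for free. The paper's SVD of $V$ packages the same information as the conditioning of the linear map from arbitrary generators to $\mathcal{R}(D)$, which fits the appendix's theme of comparing representations. Your sign $\hat y=-\tilde y$ is also the correct one. The paper's $\hat y=y-\sigma_y\uno$, and hence its middle block of $V$, has a sign slip, though it does not affect any of the norms.
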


\begin{proof}
Let $x,y$ be arbitrary generators, not necessarily normalized,
of a given matrix $D\in\D$,
and let $\sigma_x = \uno^T x/n$, $\sigma_y = \uno^T y/n$.
As recalled in Section \ref{sec:preliminaries}, $x,y$ are normalized if and only if $\sigma_x = - \sigma_y$.
Now,
$$
    \uno^TD\uno = \uno^T(x\uno^T - \uno y^T)\uno = n^2(\sigma_x - \sigma_y) .
$$
Furthermore, using \eqref{eq:ourdecomp},
$$
    \uno^TD\uno = \uno^T(\hat x\uno^T + \uno \hat y^T + \alpha \uno\uno^T)\uno = 
    n^2\alpha .
$$
Thus $\alpha = \sigma_x - \sigma_y$. 
Hence, given $x$ and $y$, the representation $\mathcal{R}(D)$ can be obtained from the linear relationships
\begin{align*}
    \hat x & = x - \sigma_x \uno \\
    \hat y & = y - \sigma_y \uno \\ 
    \sqrt{n}\alpha & = \sqrt{n}(\sigma_x - \sigma_y) . 
\end{align*}
These identities can be rewritten in matrix-vector form as $\mathcal{R}(D) = V [x;y]$, where
$$
   V = \begin{pmatrix} I_n - \uno\uno^T/n & O \\
   O & I_n - \uno\uno^T/n \\ 
   \uno_n^T/\sqrt{n} & -\uno_n^T/\sqrt{n} \end{pmatrix} \in\R^{(2n+1)\times(2n)} .
$$
Thus $\|\mathcal{R}(D)\|_2 \leq \|V\|_2\| [x; y] \|_2$,
which is notably true for $[x; y] = [\bar x;\bar y]$. 
Conversely, for any given $\mathcal{R}(D)$, 
the least $2$-norm solution of the linear equation
$\mathcal{R}(D) = V [x;y]$ gives the normalized generators of $D$,
that is, $[\bar x; \bar y] = V^+\mathcal{R}(D)$.
This implies $\|[\bar x; \bar y]\|_2 \leq \|V^+\|_2\|\mathcal{R}(D)\|_2$.

Let $Q_n\in\R^{n\times(n-1)}$ be a matrix whose columns form an orthonormal basis of the subspace $\Rn0$. Straightforward computations reveal that an SVD of $V$ is the following:
$$
   V = \begin{pmatrix} \textstyle
       0 & Q_n & O & \frac{1}{\sqrt{n}}\uno_n & 0 \\ 
       0 & O & Q_n & 0 & \frac{1}{\sqrt{n}}\uno_n \\ 
       1 & 0 & 0 & 0 & 0 
   \end{pmatrix}
   \Sigma
   \begin{pmatrix} \textstyle
       \frac{1}{\sqrt{2n}}\uno_n & Q_n & O & \frac{1}{\sqrt{2n}}\uno_n \\ 
       -\frac{1}{\sqrt{2n}}\uno_n & O & Q_n & \frac{1}{\sqrt{2n}}\uno_n 
   \end{pmatrix}^T ,
$$
with $\Sigma = \mathrm{Diag}(\sqrt{2}, 1, \ldots,1,0)\in\R^{(2n+1)\times(2n)}$. 
In particular, $\|V\|_2 = \sqrt{2}$
and $\|V^+\|_2 = 1$. This gives the inequalities in the claim.
The leftmost inequality holds as an identity when
both $\bar x$ and $\bar y$ are in $\Rn0$, while the 
equality on the right is reached when, e.g., $\bar x = \uno_n$ and $\bar y = -\uno_n$.
\end{proof}

The following result is analogous to Theorem 3.6 in \cite{liesen2016fast} (restricted to square matrices) but
pertains to the representation \eqref{eq:ourdecomp}
instead of the one with normalized generators. 
Our representation is shown to have a tighter error bound than the other one when recovering a perturbed Cauchy matrix via Algorithm \ref{alg:LL2}.

\begin{theorem}   \label{thm:almost3.6}
Let $A = \hinv{D}+N$ be a perturbed Cauchy matrix with nonzero entries, where $D\in\D$.
Let $0\leq \gamma < 1$ be a constant such that
$|D_{ij}N_{ij}| \leq \gamma$
for all $i,j=1,\ldots,n$. 
Moreover, let $\widetilde D\in\D$ be the matrix obtained from the output of Algorithm \ref{alg:LL2} applied to $A$.
Then
$$
   \frac{\|\mathcal{R}(\widetilde D) - \mathcal{R}(D)\|_2}
   {\|\mathcal{R}(D)\|_2} \leq \frac{\gamma}{1-\gamma} .
$$
\end{theorem}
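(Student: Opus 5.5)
The plan is to combine three ingredients: the linearity and orthogonality of the projector $\Phi_2$ from Theorem~\ref{thm:1+2}, the fact that $\mathcal{R}$ is, up to the factor $\sqrt{n}$, an isometry from $(\D,\|\cdot\|_\fro)$ to $\R^{2n+1}$, and an entrywise bound on $\hinv{A}-D$ derived from the hypothesis $|D_{ij}N_{ij}|\le\gamma$.

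First, by Theorem~\ref{thm:1+2}, $\widetilde D=\Phi_2(\hinv{A})$. Since $\Phi_2$ is linear and fixes $D\in\D$, we get $\widetilde D - D = \Phi_2(\hinv{A}-D)$. Because $\Phi_2$ is the Frobenius-orthogonal projector, this gives
$$
\|\widetilde D - D\|_\fro \le \|\hinv{A} - D\|_\fro .
$$
Next, $\mathcal{R}$ is linear on $\D$, because the decomposition \eqref{eq:ourdecomp} is unique. Together with $\|X\|_\fro=\sqrt{n}\|\mathcal{R}(X)\|_2$ for $X\in\D$, this yields
$$
\frac{\|\mathcal{R}(\widetilde D)-\mathcal{R}(D)\|_2}{\|\mathcal{R}(D)\|_2}
=\frac{\|\widetilde D - D\|_\fro}{\|D\|_\fro}
\le \frac{\|\hinv{A}-D\|_\fro}{\|D\|_\fro}.
$$

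It then remains to bound the last ratio entrywise and invoke Lemma~\ref{lem:easy}. Write $A_{ij}=1/D_{ij}+N_{ij}=(1+D_{ij}N_{ij})/D_{ij}$; this is nonzero since $|D_{ij}N_{ij}|\le\gamma<1$, which also forces $D_{ij}\neq 0$. Hence
$$
\frac1{A_{ij}}-D_{ij}=D_{ij}\Big(\frac{1}{1+D_{ij}N_{ij}}-1\Big)=-D_{ij}\,\frac{D_{ij}N_{ij}}{1+D_{ij}N_{ij}} .
$$
It follows that $|1/A_{ij}-D_{ij}|/|D_{ij}|\le \gamma/(1-\gamma)$. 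Lemma~\ref{lem:easy} in the Frobenius norm, applied with the matrix $D$ in the role of $A$ and $\hinv{A}$ in the role of $B$, gives $\|\hinv{A}-D\|_\fro\le \frac{\gamma}{1-\gamma}\|D\|_\fro$, which completes the chain.

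The main point needing care is the middle step. The isometry must be applied to the difference $\widetilde D - D$, which requires the linearity of $\mathcal{R}$ on $\D$, and this follows from uniqueness in \eqref{eq:ourdecomp}. With that in hand, the argument reduces to the contraction property of the orthogonal projector and the entrywise estimate above.
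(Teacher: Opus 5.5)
Your proof is correct and follows the paper's argument essentially step for step. It uses the same chain: $\widetilde D - D=\Phi_2(\hinv{A}-D)$, contraction of the orthogonal projector $\Phi_2$, the entrywise bound $|1/A_{ij}-D_{ij}|\le\frac{\gamma}{1-\gamma}|D_{ij}|$, and the scaled isometry of $\mathcal{R}$ on $\D$. You rightly make the linearity of $\mathcal{R}$ explicit, and your denominator $1+D_{ij}N_{ij}$ is the correct one, whereas the paper's display writes $1-D_{ij}N_{ij}$ as a typo. One small correction: $D_{ij}\neq 0$ is not forced by $|D_{ij}N_{ij}|\le\gamma<1$, since that bound holds trivially when $D_{ij}=0$; it follows instead from $\hinv{D}$ being defined.
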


\begin{proof}
Let $E = D - \hinv{A}$. By hypothesis,
$$
   E_{ij} = D_{ij} - \frac{1}{A_{ij}}
   = D_{ij} - \frac{D_{ij}}{1+D_{ij}N_{ij}} 
   = D_{ij} \frac{D_{ij}N_{ij}}{1-D_{ij}N_{ij}} .
$$
Therefore $|E_{ij}| \leq \gamma|D_{ij}|/(1-\gamma)$ and,
consequently, $\|E\|_\fro \leq \gamma\|D\|_\fro/(1-\gamma)$.
In the notation of Theorem \ref{thm:1+2}, we have
$$
   \widetilde D = \Phi_2(\hinv{A}) = 
   \Phi_2(D) + \Phi_2(E) = D + \Phi_2(E) .
$$
Furthermore,
$$
   \|\widetilde D - D\|_\fro = \|\Phi_2(E)\|_\fro 
   \leq \|E\|_\fro \leq \|D\|_\fro\frac{\gamma}{1-\gamma} ,
$$
since $\Phi_2$ is an orthogonal projector.
It remains to note that
$$
   \frac{\|\mathcal{R}(\widetilde D) - \mathcal{R}(D)\|_2}
   {\|\mathcal{R}(D)\|_2} =
   \frac{\|\mathcal{R}(\widetilde D - D)\|_2}
   {\|\mathcal{R}(D)\|_2} 
   = \frac{\|\widetilde D - D\|_2}{\|D\|_2} 
   \leq \frac{\gamma}{1-\gamma} ,
$$
and the proof is complete.
\end{proof}


\bibliographystyle{plain}  
\bibliography{cauchy}

\end{document}